\documentclass[reqno,english]{amsart}
\usepackage[utf8]{inputenc}
\usepackage{amsmath,amssymb,amsthm,mathrsfs,color,times,textcomp,yfonts,mathtools,cases}

\allowdisplaybreaks[4]

\usepackage{bm}

\usepackage[T1]{fontenc} 

\usepackage{subcaption}
\usepackage[normalem]{ulem}
\usepackage[export]{adjustbox}
\usepackage{esint}
\usepackage{xcolor}
\usepackage{array}
\usepackage[colorlinks=true]{hyperref}
\hypersetup{urlcolor=blue, citecolor=blue, linkcolor=red}

\hypersetup{
colorlinks=true,
linkcolor=red
}
\usepackage{indentfirst}
\usepackage{graphicx}
\usepackage{float}
\numberwithin{equation}{section}
\newtheorem{theorem}{Theorem}[section]
\newtheorem{lemma}[theorem]{Lemma}

\newtheorem{proposition}[theorem]{Proposition}

\newtheorem{conjecture}{Conjecture}

\newtheorem{othertheorem}{Theorem}

\newtheorem{remark}{Remark}[section]

\newcommand{\Rom}[1]{%
\textup{\uppercase\expandafter{\romannumeral#1}}%
}
\newcommand\norm[1]{\left\lVert#1\right\rVert}

\newcommand*{\ud}{d}

\usepackage{mathtools}
\usepackage{thmtools}
\declaretheoremstyle[headfont=\normalfont]{normalhead}
\usepackage{color}
\usepackage{morefloats}
\usepackage{textcomp}
\usepackage{url}

\title[Sharp Beckner's Inequalities]{Sharp Beckner's Inequalities for Axially Symmetric Functions on $\mathbb{S}^N$}

\author{Changfeng Gui}
\address{Department of Mathematics, University of Macau, Taipa, Macau}
\email{changfenggui@um.edu.mo}

\author{Tuoxin Li}
\address{Department of Mathematics \\  The Chinese University of Hong Kong \\ Shatin \\ NT \\ Hong Kong}
\email{txli@math.cuhk.edu.hk}

\author{Juncheng Wei}
\address{Department of Mathematics \\  The Chinese University of Hong Kong \\ Shatin \\ NT \\ Hong Kong}
\email{wei@math.cuhk.edu.hk}

\author{Zikai Ye}
\address{Department of Mathematics \\  The Chinese University of Hong Kong \\ Shatin \\ NT \\ Hong Kong}
\email{zkye@math.cuhk.edu.hk}
\begin{document}

\subjclass{35B07, 35J15, 35J35, 35J60, 53A05, 53C18, 53C21} 

\keywords{Beckner’s inequality, Chang-Yang conjecture, optimal constant, Gegenbauer polynomials, Paneitz operator}

\begin{abstract}
We prove that for every integer $N\geq 3$ and $\alpha\geq \frac{1}{2}$, Beckner's inequality
\begin{equation*}
\frac{\alpha}{2}\int_{\mathbb{S}^N}u(P_{N}u)
dw+(N-1)!\int_{\mathbb{S}^N}u
dw-\frac{(N-1)!}{N}\ln\int_{\mathbb{S}^N}e^{Nu}
dw\geq 0
\end{equation*}
holds for any axially symmetric $u\in H^{\frac{N}{2}}(\mathbb{S}^N)$ whose center of mass is at the origin. The proof is mainly based on a weighted $\ell ^2$ estimate on Gegenbauer coefficients and a rigidity theorem for stable critical points. Hence, we answer the generalized Chang-Yang conjecture positively in the axially symmetric case for every integer $N\geq 3$. 
\end{abstract}

\maketitle
\hypersetup{linkcolor=black}
\tableofcontents

\section{Introduction and main results}

\subsection{Sharp Beckner's inequality}

For $\alpha>0$, we consider the functional
\begin{equation*}
J_{\alpha,N}(u):=\frac{\alpha}{2}\int_{\mathbb{S}^N}u(P_{N}u)
dw+(N-1)!\int_{\mathbb{S}^N}u
dw-\frac{(N-1)!}{N}\ln\int_{\mathbb{S}^N}e^{Nu}
dw
\end{equation*}
for $u\in H^{\frac{N}{2}}(\mathbb{S}^N)$, where $dw$ denotes the normalized surface measure on $\mathbb{S}^N$ so that $\int_{\mathbb{S}^N}dw=1$. Here
\begin{equation*}
\begin{aligned}
    P_N:=
    \begin{cases}
        \prod_{k=0}^{\frac{N-2}{2}}(-\Delta+k(N-k-1)),&\text{ for }N\text{ even}, \\
        \left(-\Delta+(\frac{N-1}{2})^2\right)^{\frac{1}{2}}\prod_{k=0}^{\frac{N-3}{2}}(-\Delta+k(N-k-1)),&\text{ for }N\text{ odd}
    \end{cases}
\end{aligned}
\end{equation*}
represents the Paneitz operator on $\mathbb{S}^N$ introduced by Paneitz in \cite{Paneitz2008}.

In his seminal paper \cite{Beckner1993}, Beckner proved the higher-order Moser-Trudinger-type inequality
\begin{equation*}
J_{1,N}(u)\geq 0,    \qquad u\in H^{\frac{N}{2}}(\mathbb{S}^N).
\end{equation*}
This inequality is referred to as Beckner's inequality. 

The Paneitz operator $P_N$ belongs to the family of conformally invariant operators $P_{N,g,k}$ constructed by Graham, Jenne, Mason, and Sparling on a Riemannian manifold $(M^N, g)$ in \cite{GJMS1992}, called the GJMS operator. When $k=\frac{N}{2}$, under conformal change $\tilde g=e^{2\omega}g$, its covariance law is
\begin{equation*}
P_{N,\tilde g,}(\phi)=e^{-N\omega}P_{N,g}(\phi).
\end{equation*}
We refer to \cite{CLY2019, CM2023, ChangYang1995, ChangYang1997, DHL2000,DM2008, FG2013, GHX2021, GurMal2015, LiXiong2019, Mal2006, WX1998} and the references therein for results and background on $Q$-curvature problems and GJMS operators. 

In addition, let $\xi=(\xi_1,\dots,\xi_{N+1})\in \mathbb{S}^N$,
A. Chang and P. Yang \cite{ChangYang1995} proved that, if $u$ belongs to the zero center-of-mass set
\begin{equation*}
\mathcal{L}_{N}=\left\{u\in H^{\frac{N}{2}}(\mathbb{S}^N)\ :\ \int_{\mathbb{S}^N}e^{Nu} \xi_j dw=0,\ j=1,\cdots, N+1 \right \},
\end{equation*}
then for any $\alpha\geq \frac{1}{2}$, there exists a constant $C(\alpha,N)\geq0$ such that 
\begin{equation*}
J_{\alpha,N}(u)\geq -C(\alpha,N)    
\end{equation*}
for any $u\in \mathcal{L}_{N}$. This lower bound motivates the generalized Chang-Yang conjecture that, as in dimension two, $C(\alpha,N)$ can be chosen to be zero.

\begin{conjecture}[Generalized Chang-Yang conjecture]\label{Chang-Yang conj}
For $\alpha \geq \frac{1}{2}$, 
\begin{equation*}
    \inf_{u \in \mathcal{L}_{N}} J_{\alpha,N}(u)=0.
\end{equation*}
\end{conjecture}

A critical point of $J_{\alpha,N}$ under the zero center of mass constraint satisfies the following $Q$-curvature-type equation on $\mathbb{S}^N$:
\begin{equation*}
\alpha P_N u+(N-1)!(1-\frac{e^{Nu}}{\int_{\mathbb{S}^N}e^{Nu}dw})=\sum_{i=1}^{N+1}a_i x_i e^{Nu} \ \mbox{on} \ \mathbb{S}^N
\end{equation*}
for some constants $a_i$, $i=1,\dots, N+1$.

Due to conformal invariance of $J_{1,N}$, A. Chang and P. Yang \cite{ChangYang1995} showed that the following Kazdan-Warner condition 
\begin{equation*}
\int_{\mathbb S^N}\langle \nabla Q, \nabla x_i\rangle e^{Nu} \ud w=0,\ i=1,\dots, N+1,
\end{equation*}
holds for the prescribed $Q$-curvature equation
\begin{equation*}
P_{N}u + (N-1)!- Qe^{Nu}=0 \text{ on }\mathbb S^N.
\end{equation*}
It follows that for $\frac{1}{2}\leq \alpha\leq 1$, all Lagrange multipliers $a_i$ vanish. See also Wei-Xu \cite{WX1998} for more details. Then the Euler-Lagrange equation becomes
\begin{equation}\label{paneitz}
\alpha P_N u+(N-1)!(1-\frac{e^{Nu}}{\int_{\mathbb{S}^N}e^{Nu}dw})=0 \ \mbox{on} \ \mathbb{S}^N.
\end{equation}

For $\frac{1}{2}\leq \alpha<1$, if \eqref{paneitz} admits only constant solutions, then Conjecture \ref{Chang-Yang conj} is valid. When $\alpha<1$ is sufficiently close to $1$, Wei and Xu \cite{WX1998} proved that all solutions to \eqref{paneitz} are constants, but the full range $\alpha\in[\frac{1}{2},1)$ remains open.

On $\mathbb{S}^2$, the original conjecture of A. Chang and P. Yang in \cite{ChangYang1987, ChangYang1988} is that
\begin{conjecture}[Chang-Yang conjecture]
For $\alpha \geq \frac{1}{2}$, 
\begin{equation*}
    \inf_{u \in \mathcal{L}_{2}} J_{\alpha,2}(u)=0.
\end{equation*}
\end{conjecture}

The corresponding problem on $\mathbb{S}^2$ is known as the Nirenberg problem:
\begin{equation*}
-\alpha \Delta u + 1-  \frac{e^{2u}}{\int_{\mathbb{S}^2} e^{2u}}=0 \ \ \mbox{on} \ \mathbb{S}^2.
\end{equation*}
This problem has been extensively studied over the past four decades. See \cite{ChangYang1987, ChangYang1988, JinLiXiong2017} and the references therein. Feldman, Froese, Ghoussoub, and the first author  \cite{FFGG1998} first established the conjecture for axially symmetric functions when $\alpha >\frac{16}{25}-\epsilon$. Here we say a function $u$ on $\mathbb{S}^2$ is axially symmetric if, up to a rotation, $u=u(x)$ for $x=x_{1}\in (-1,1)$. The first and the third authors \cite{GW2000} subsequently proved the sharp axially symmetric version of the conjecture.  Later, Ghoussoub and Lin \cite{GL2010} showed that all critical points of $J_{\alpha,2}$ are axially symmetric and hence the conjecture holds true for $\frac{2}{3}-\epsilon<\alpha<1$. Finally, the first author and Moradifam \cite{GM2018} established the sphere covering inequality to prove that all solutions are axially symmetric. With the help of the first and the third authors' result \cite{GW2000} for axially symmetric case, they showed the full conjecture.
Furthermore, Shi, Sun, Tian and Wei \cite{SSTW2019} showed that all even solutions are axially symmetric for $\frac{1}{4}\leq \alpha<1$. For more general results on improved Moser-Trudinger-Onofri inequalities on $\mathbb{S}^2$ and their connections with the Szeg\"o limit theorem, see \cite{ChangGui202, ChangHang2022}.

For the higher-dimensional equation \eqref{paneitz} on $\mathbb{S}^N$, as mentioned above, the third author and Xu proved the conjecture when $\alpha$ is close to $1$. Since direct treatment of Conjecture \ref{Chang-Yang conj} seems difficult, in view of the work of Ghoussoub-Lin \cite{GL2010} and  the work of the first author and Moradifam \cite{GM2018}, the conjecture under axial symmetry is a natural and crucial first step in higher dimensions. Several results have been obtained for axially symmetric solutions with $N=4,6,8$. Gui-Hu-Xie \cite{GHW2022} obtained non-constant solutions by bifurcation methods for $\frac{1}{N+1}<\alpha<\frac{1}{2}$. They also proved the axially symmetric version of the conjecture for $\alpha\geq 0.517$ ($N=4$), $\alpha \geq 0.6168$ ($N=6$) and $ \alpha \geq 0.8261$ ($N=8$). The sharp bound $\alpha\geq \frac{1}{2}$ is obtained by Li-Wei-Ye \cite{LWY2022} ($N=4$) and Gui-Li-Wei-Ye \cite{GLWY2025} using refined estimates on Gegenbauer polynomials. 

For odd $N$, there are few results on the sharp Beckner's inequality.  When $N=1$, Beckner's inequality becomes the classical Lebedev-Milin inequality.

\begin{othertheorem}[Lebedev-Milin inequality]
    For any $u\in H^1(\mathbb D)$ with $\int_{\mathbb S^1}u\ud \theta=0$, where $\mathbb D$ is the unit disk in $\mathbb R^2$,
    \begin{equation*}
        \log\left(\frac{1}{2\pi}\int_{\mathbb S^1}e^u \ud \theta\right)\leq \frac{1}{4\pi}\norm{\nabla u}_{L^2(\mathbb D)}^{2}.
    \end{equation*}
\end{othertheorem}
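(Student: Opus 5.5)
The plan is the classical route through the Grunsky/Lebedev--Milin inequality, written for boundary data. Let $u$ be harmonic in $\mathbb D$; replacing $u$ by its harmonic extension only decreases the Dirichlet energy and leaves the boundary values unchanged. Expand the boundary values as $u(e^{i\theta})=\sum_{k\neq 0}a_k e^{ik\theta}$. This has no constant term because $\int_{\mathbb S^1}u\,d\theta=0$, and since $u$ is real, $a_{-k}=\overline{a_k}$. Then $u=\operatorname{Re} f$ with
\begin{equation*}
f(z)=2\sum_{k\geq1}a_k z^k,\qquad f(0)=0,
\end{equation*}
and $\frac{1}{4\pi}\norm{\nabla u}_{L^2(\mathbb D)}^2=\frac12\sum_{k\ge1}k|a_k|^2$. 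So the claim is equivalent to the estimate
\begin{equation*}
\frac{1}{2\pi}\int_{\mathbb S^1}e^{u}\,d\theta\le \exp\Bigl(\tfrac12\textstyle\sum_{k\ge1} k|a_k|^2\Bigr).
\end{equation*}

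\textbf{Step 1 (rewrite the left side).} Put $g=e^{f/2}=\sum_{n\ge0} b_n z^n$. Since $|g|^2=e^{\operatorname{Re} f}=e^{u}$ on the circle, Parseval gives $\frac{1}{2\pi}\int_{\mathbb S^1}e^{u}\,d\theta=\sum_{n\ge0}|b_n|^2$. Writing $f/2=\sum_{k\ge1}c_kz^k$ with $c_k=a_k$, the target becomes the Lebedev--Milin exponential inequality
\begin{equation*}
\sum_{n\ge0}|b_n|^2\le \exp\Bigl(\sum_{k\ge1}\tfrac{k}{2}|c_k|^2\Bigr).
\end{equation*}
Equivalently, with $h=f/2$, $e^{h}=\sum_n b_n z^n$ and $\sum|b_n|^2\le\exp(\sum k|\gamma_k|^2)$ where $\gamma_k=c_k/\sqrt2$. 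To keep the constants straight I would instead set $e^{h}=\sum d_nz^n$ with $h=\sum \alpha_k z^k$ and prove $\sum|d_n|^2\le \exp(\sum k|\alpha_k|^2)$; with $\alpha_k=a_k$ the exponent on the right is $\sum k|a_k|^2$, while the required exponent is $\tfrac12\sum k|a_k|^2$. I expect this factor of $2$ to be the delicate point, and it is settled below by the correct identification of the norms.

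\textbf{Step 2 (the coefficient inequality).} Differentiating $g=e^{h}$ gives $g'=h'g$, that is, $nd_n=\sum_{k=1}^{n}k\alpha_k d_{n-k}$ with $d_0=1$. Cauchy--Schwarz yields
\begin{equation*}
n^2|d_n|^2\le\Bigl(\sum_{k=1}^n k|\alpha_k|^2\Bigr)\Bigl(\sum_{k=1}^{n}k|d_{n-k}|^2\Bigr).
\end{equation*}
An induction on $n$ then shows $\sum_{j\le n}|d_j|^2\le\prod_{k\le n}\bigl(1+\tfrac{A_k}{k}\bigr)$-type bounds, with $A_n=\sum_{k\le n}k|\alpha_k|^2$; this is the standard Lebedev--Milin induction. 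Concretely, one proves $\sum_{j=0}^n|d_j|^2\le (n+1)\exp\bigl(\frac{1}{n+1}\sum_{k\le n}(n+1-k)(k|\alpha_k|^2-\tfrac1k)\bigr)$ and then lets $n\to\infty$. Since $1-\frac{k}{n+1}\le1$, the limit is at most $\exp(\sum k|\alpha_k|^2)$.

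\textbf{Step 3 (constants).} Apply Step 2 with $\alpha_k=a_k$, remembering that $e^{u}=|e^{h}|^2$ only when $h=f/2$, i.e. $\alpha_k=a_k$. The exponent is then $\sum k|a_k|^2$. The energy, however, is $\frac{1}{4\pi}\int|\nabla u|^2=\frac{1}{4\pi}\cdot 2\pi\sum_{k\ge1}2k|a_k|^2\cdot\ldots$, and I would recompute this directly. For $u=\operatorname{Re}\sum 2a_kz^k$ we have $\int_{\mathbb D}|\nabla u|^2=\int|f'|^2/\ldots$. Using $\int_{\mathbb D}|\nabla \operatorname{Re}F|^2=\frac12\int_{\mathbb D}|F'|^2=\frac12\pi\sum k|F_k|^2$ with $F_k=2a_k$ gives $2\pi\sum k|a_k|^2$, hence $\frac{1}{4\pi}\int|\nabla u|^2=\frac12\sum k|a_k|^2$.

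This leaves the factor of $2$ open, so I would recheck $|g|^2$. Since $|e^{f/2}|^2=e^{\operatorname{Re} f}=e^{u}$, the choice of $g$ is correct. The only remaining fix is that Step 2 must give the sharper $\exp(\frac12\sum k|\alpha_k|^2)$ rather than $\exp(\sum k|\alpha_k|^2)$ when measured against the correct norm. I expect this matching of normalizations, together with the induction in Step 2, to be the main obstacle, and I would verify it on $u=\epsilon\cos\theta$, where both sides agree to second order.
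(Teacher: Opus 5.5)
The paper gives no proof of this statement. It is quoted as the classical $N=1$ case of Beckner's inequality, $\ln\int_{\mathbb S^1}e^u\,dw\le\frac12\int_{\mathbb S^1}uP_1u\,dw$ for $\int_{\mathbb S^1}u\,dw=0$, with $P_1=(-\Delta)^{1/2}$. Your overall route is the classical one and can be completed: harmonic extension, $g=e^{f/2}$ with $|g|^2=e^u$ on the circle, Parseval, then the exponential Lebedev--Milin coefficient inequality. As written, however, the proposal has two genuine errors and does not close.

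\textbf{The normalization.} The identity $\int_{\mathbb D}|\nabla\operatorname{Re}F|^2=\frac12\int_{\mathbb D}|F'|^2$ is wrong. By the Cauchy--Riemann equations $|\nabla\operatorname{Re}F|=|F'|$ pointwise, so $\int_{\mathbb D}|\nabla u|^2=\pi\sum_{k\ge1}k|2a_k|^2$ and
\[
\frac{1}{4\pi}\norm{\nabla u}_{L^2(\mathbb D)}^2=\sum_{k\ge1}k|a_k|^2=\frac12\sum_{k\neq0}|k|\,|a_k|^2 .
\]
This also equals $\frac12\int_{\mathbb S^1}uP_1u\,dw$, consistent with the paper's normalization. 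So there is no factor of $2$ to recover. With $h=f/2$ and $\alpha_k=a_k$, the theorem is exactly $\sum_n|d_n|^2\le\exp(\sum_k k|\alpha_k|^2)$, which you had already written down. The ``sharper'' bound $\exp(\frac12\sum k|\alpha_k|^2)$ that you say Step 2 must deliver is false. For $u=\epsilon\cos\theta$ one has $a_1=\epsilon/2$ and $\frac1{2\pi}\int_0^{2\pi}e^{\epsilon\cos\theta}\,d\theta=1+\frac{\epsilon^2}{4}+O(\epsilon^4)$, which exceeds $e^{\epsilon^2/8}$ for small $\epsilon\neq0$. The test you propose at the end would have exposed the error rather than confirmed the claim.

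\textbf{Step 2 does not give the sharp constant.} Letting $n\to\infty$ in the second Lebedev--Milin inequality does not yield $\exp(\sum k|\alpha_k|^2)$. The $\alpha$-independent part of its right-hand side is
\[
(n+1)\exp\Bigl(-\frac{1}{n+1}\sum_{k=1}^n\frac{n+1-k}{k}\Bigr)=(n+1)\,e^{-H_n+\frac{n}{n+1}}\longrightarrow e^{1-\gamma}\approx 1.53,
\]
where $H_n$ is the $n$-th harmonic number and $\gamma$ is Euler's constant. So you only obtain $\sum|d_n|^2\le e^{1-\gamma}\exp(\sum k|\alpha_k|^2)$, i.e.\ the theorem with an extra additive constant $1-\gamma$. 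Discarding the $-1/k$ terms, as your remark ``$1-\frac{k}{n+1}\le1$'' suggests, leaves the prefactor $n+1$, which diverges.

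The underlying problem is your Cauchy--Schwarz splitting $n^2|d_n|^2\le(\sum k|\alpha_k|^2)(\sum k|d_{n-k}|^2)$. The $k=n$ term contributes $n|d_0|^2=n$ to the second factor, so the resulting bound for $|d_n|^2$ is at least $\frac1n\sum_{k\le n}k|\alpha_k|^2$, which is not summable in $n$.

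The missing step is the other splitting, taken over the $n$ summands of $nd_n=\sum_{k=1}^nk\alpha_kd_{n-k}$:
\[
|d_n|^2\le\frac1n\sum_{k=1}^n kA_k|d_{n-k}|^2,\qquad A_k:=k|\alpha_k|^2 .
\]
Compare this with the coefficients $C_n$ of $\exp(\sum_kA_kz^k)$, which satisfy $nC_n=\sum_{k=1}^nkA_kC_{n-k}$ and $C_0=1$. Induction gives $|d_n|^2\le C_n$. Summing nonnegative terms then gives $\sum_n|d_n|^2\le\sum_nC_n=\exp(\sum_kA_k)=\exp(\frac1{4\pi}\norm{\nabla u}_{L^2(\mathbb D)}^2)$.

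Finally, Parseval for $e^{f/2}$ needs justification for a general $H^{1/2}$ trace. One way is to prove the inequality for trigonometric polynomials and pass to the limit, using Fatou's lemma on the left-hand side.
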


Using Szeg\"o limit theorem, Widom \cite{Widom1988} improved the best constant under extra orthogonality conditions.
\begin{othertheorem}\label{szego}
    For any $u\in H^1(\mathbb D)$ with $\int_{\mathbb S^1}u\ud \theta=0$ and $\int_{\mathbb S^1}e^{u}e^{ik\theta}\ud \theta=0$, for $k=1,\dots,m$,
    \begin{equation*}
        \log\left(\frac{1}{2\pi}\int_{\mathbb S^1}e^u \ud \theta\right)\leq \frac{1}{4(m+1)\pi}\norm{\nabla u}_{L^2(\mathbb{D})}^{2}.
    \end{equation*}
    Equivalently, for $ \alpha \geq \frac{1}{m+1}$,
    \begin{equation*}
        \frac{\alpha }{2}\int_{\mathbb S^1}(P_1 u)u\ud w+\int_{\mathbb S^1}u\ud w-\ln\int_{\mathbb S^1}e^{u}\ud w \geq 0.
    \end{equation*}
\end{othertheorem}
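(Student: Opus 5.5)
The plan is to recast the statement as an inequality for Toeplitz determinants and combine the strong Szeg\H{o} limit theorem with a monotonicity property of these determinants. This follows Widom's route. Write $u(\theta)=\sum_{k\in\mathbb Z}\hat u_k e^{ik\theta}$ with $\hat u_0=0$. Let $u$ also denote the harmonic extension to $\mathbb D$, which minimizes the Dirichlet energy. A direct computation gives
\begin{equation*}
\frac{1}{4\pi}\norm{\nabla u}_{L^2(\mathbb D)}^2=\sum_{k\geq 1}k|\hat u_k|^2 .
\end{equation*}
On $\mathbb S^1$ we have $P_1=(-\Delta)^{1/2}$, so $\int_{\mathbb S^1}uP_1u\,dw=\sum_{k}|k||\hat u_k|^2=2\sum_{k\ge1}k|\hat u_k|^2$. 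Hence the two formulations in Theorem \ref{szego} are equivalent (the Lebedev--Milin theorem is the case $m=0$). It also suffices to treat $\alpha=\frac{1}{m+1}$, since the quadratic term is nonnegative. So the goal is
\begin{equation*}
\log A\leq \frac{1}{m+1}\sum_{k\ge1}k|\hat u_k|^2,\qquad A:=\frac{1}{2\pi}\int_{\mathbb S^1}e^{u}\,d\theta .
\end{equation*}

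Next I introduce the Toeplitz determinants $D_n=\det\big(c_{j-l}\big)_{j,l=0}^{n}$ of the positive symbol $e^{u}$, where $c_k=\frac{1}{2\pi}\int_{\mathbb S^1} e^{u}e^{-ik\theta}d\theta$. The orthogonality hypotheses say exactly that $c_k=0$ for $1\le |k|\le m$ (using $c_{-k}=\overline{c_k}$). Therefore the $(m+1)\times(m+1)$ Toeplitz matrix is $A\,I_{m+1}$ and $D_m=A^{m+1}$.

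The key analytic input has two parts. The first is the Szeg\H{o}--Kolmogorov fact that $D_n/D_{n-1}$ equals the minimal one-step prediction error
\begin{equation*}
\min\left\{\frac{1}{2\pi}\int_{\mathbb S^1}\Big|1+\sum_{j=1}^n b_je^{ij\theta}\Big|^2e^{u}\,d\theta\right\}.
\end{equation*}
This quantity is nonincreasing in $n$ because the admissible class grows, and it tends to the geometric mean $G=\exp(\hat u_0)=1$. Hence $D_n/D_{n-1}\ge 1$ for all $n$, so $n\mapsto D_n$ (which equals $D_n/G^{n+1}$) is nondecreasing. The second part is the strong Szeg\H{o} limit theorem, valid for $u\in H^{1/2}(\mathbb S^1)$ by Ibragimov/Johansson:
\begin{equation*}
\lim_{n\to\infty}D_n=E(u):=\exp\Big(\sum_{k\ge1}k|\hat u_k|^2\Big).
\end{equation*}
Combining the two parts gives $A^{m+1}=D_m\le \lim_n D_n=E(u)$. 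Taking logarithms yields the desired estimate.

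The main obstacle is justifying the strong Szeg\H{o} theorem at the full $H^{1/2}$ regularity, with no smoothness or boundedness beyond what $H^{1/2}$ gives. First I would prove the argument for trigonometric polynomials $u$, where the classical theorem applies directly. Then I would pass to general $u$ by density. Here $D_m$ and $A$ depend continuously on $u$ because $e^{u}\in L^p$ for all $p$ (Moser--Trudinger on the circle), and the right-hand side is continuous in the $H^{1/2}$ norm. One subtlety is that the approximating polynomials must preserve the orthogonality constraints. To avoid this, I would instead prove the unconstrained inequality $D_m\le E(u)$ for all $u\in H^{1/2}$ with $\hat u_0=0$ by density, and only at the end specialize to $D_m=A^{m+1}$ under the constraints.
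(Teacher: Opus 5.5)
Your proposal is correct and is essentially Widom's Toeplitz-determinant argument, which is the approach the paper points to; the paper only cites \cite{Widom1988} and gives no proof of its own. The constraints force $D_m=A^{m+1}$, and the monotonicity of $D_n/G^{n+1}$ together with the strong Szeg\H{o} limit theorem then gives $A^{m+1}\le E(u)$, with the density step harmless and unnecessary if you apply Ibragimov's sharp form of the strong Szeg\H{o} theorem directly on $H^{1/2}(\mathbb S^1)$ (and $D_n/D_{n-1}\ge G$ even follows from Jensen's inequality without Szeg\H{o}'s limit theorem).
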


For odd $N\geq 3$, as mentioned before, Wei-Xu \cite{WX1998} proved the generalized Chang-Yang conjecture when $\alpha$ is close to $1$. Zhang \cite{Zhang2025} recently proved the rigidity for $\alpha>1$. To the best of our knowledge, there are no other results. One of the difficulties is the non-locality of the Paneitz operator $P_N$ when $N$ is odd.

In this paper, we focus on axially symmetric solutions on $\mathbb{S}^N$. For such functions $u$ of $x=\xi_1\in[-1,1]$,  the Paneitz operator $P_N$ can be written as
\begin{equation*}
P_{r,N}u=   (-1)^{\frac{N}{2}}[(1-x^2)^{\frac{N}{2}}u']^{(N-1)} 
\end{equation*}
when $N$ is even and 
\begin{equation*}
\begin{aligned}
P_{r,N}u
=&
\left(-(1-x^2)\frac{d^2}{dx^2}+Nx\frac{d}{dx}+(\frac{N-1}{2})^2\right)^{\frac{1}{2}}\\
&\cdot \prod_{k=0}^{\frac{N-3}{2}}\left(-(1-x^2)\frac{d^2}{dx^2}+Nx\frac{d}{dx}+k(N-k-1)\right)u    
\end{aligned}
\end{equation*}
when $N$ is odd.

The Beckner functional $J_{\alpha,N}$ reduces to
\begin{equation*}
\begin{aligned}
\mathcal{I}_{\alpha,N}(u)
&=\frac{\alpha}{2}\int_{-1}^{1}(1-x^2)^{\frac{N-2}{2}}uP_{r,N}u \ d x+(N-1)!\int_{-1}^{1}(1-x^2)^{\frac{N-2}{2}}u\ d x\nonumber\\
&-\frac{(N-1)!\sqrt{\pi}\Gamma(\frac{N}{2})}{N\Gamma(\frac{N+1}{2})}\log\left(\frac{\Gamma(\frac{N+1}{2})}{\sqrt{\pi}\Gamma(\frac{N}{2})}\int_{-1}^{1}(1-x^2)^\frac{N-2}{2} e^{Nu} d x\right)    
\end{aligned}
\end{equation*}
and
the zero-center-of-mass restriction set becomes
\begin{equation}\label{AxialLrN}
\mathcal{L}_{r,N}=\left\{u\in H^{\frac{N}{2}}(\mathbb{S}^N):\ u=u(x)\text{ and }\int_{-1}^{1}x(1-x^2)^{\frac{N-2}{2}}e^{Nu} d x=0\right\}.
\end{equation}

We close the remaining gap for general $N\geq 3$ and hence prove the generalized Chang-Yang conjecture in the axially symmetric case for every integer $N\geq 3$. 
\begin{theorem}\label{main}
Let $N\geq 3$ be an integer and $\alpha\geq \frac{1}{2}$. Then
\begin{equation*}
\inf_{ u\in {\mathcal{L}_{r,N}}} \mathcal{I}_{\alpha,N} (u)=0.    
\end{equation*}
Moreover, for $u\in \mathcal{L}_{r,N}$, $\mathcal{I}_{\alpha,N}(u)=0$ if and only if $u$ is a constant.
\end{theorem}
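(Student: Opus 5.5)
We will follow the strategy of the earlier axially symmetric works \cite{GW2000, LWY2022, GLWY2025}. By the result of Chang--Yang \cite{ChangYang1995}, $\mathcal{I}_{\alpha,N}$ is bounded below on $\mathcal{L}_{r,N}$ for $\alpha\ge\frac12$. Since $\mathcal{I}_{\alpha,N}$ is increasing in $\alpha$ (the quadratic form $\int uP_{r,N}u\ge 0$), it suffices to treat $\frac12\le\alpha<1$; the case $\alpha\ge 1$ follows from Beckner's inequality. For $\alpha>\frac12$ the functional is coercive on $\mathcal{L}_{r,N}$ modulo constants. So the infimum is attained by a minimizer $u$. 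By the Kazdan--Warner argument recalled above, all Lagrange multipliers vanish for $\frac12\le\alpha\le1$. Hence $u$ solves \eqref{paneitz} in its axially symmetric form, and it is moreover a \emph{stable} critical point. The theorem then reduces to a rigidity statement: every axially symmetric, stable solution of \eqref{paneitz} with $\frac12\le\alpha<1$ is constant. The endpoint $\alpha=\frac12$ and the equality case follow by a limiting argument $\alpha\downarrow\frac12$ together with the same rigidity.

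For the rigidity step, normalize $\int e^{Nu}=\int 1$ after adding a constant. Set $G=e^{Nu}$ and expand $u$ and $G$ in Gegenbauer polynomials $C_k^{(\frac{N-1}{2})}(x)$; these are the eigenfunctions of $P_{r,N}$ with eigenvalues $\lambda_k=\frac{\Gamma(N+k)}{\Gamma(k)}$. The equation gives the coefficient relation $\alpha\lambda_k u_k=(N-1)!\,G_k$ for $k\ge1$, and the center of mass condition gives $G_1=0$, so $u_1=0$. Following \cite{GW2000}, pass to the auxiliary function $F=(1-x^2)(u')\cdot(\text{weight})$ or directly to $G'$. Integrating the equation against $x$-derivatives yields two identities: the Pohozaev-type identity from the conformal vector field $(1-x^2)\partial_x$, and the identity obtained by multiplying by $(1-x^2)u'$. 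Combining them gives an equality of the form $\sum_k c_k(\alpha)\,\lambda_k$-weighted $|G_k|^2=0$, or equivalently of $\sum_k(\text{weight})|u_k|^2$ expressions.

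The key ingredient is a weighted $\ell^2$ estimate on the Gegenbauer coefficients. It should compare $\sum_k a_k|G_k|^2$ with $\sum_k b_k|G_k|^2$, with weights polynomial in $k$, and be valid for $G=e^{Nu}$ with $G_1=0$ and $G>0$. The estimate is strict unless $G_k=0$ for all $k\ge2$. Stability, tested with the direction $\varphi=x$ modified to respect the constraint, or with $\varphi=(1-x^2)u'$, supplies the missing inequality on the low modes $k=2,3$ that the weights cannot handle.

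\textbf{Main obstacle.} For odd $N$ the operator is nonlocal, so the ODE manipulations of the even case (integrations by parts producing $[(1-x^2)^{N/2}u']^{(N-1)}$) are unavailable. Everything must be done on the coefficient side, using the recurrences $(1-x^2)\frac{d}{dx}C_k^{(\lambda)}$ and $xC_k^{(\lambda)}$ in terms of $C_{k\pm1}^{(\lambda)}$, which are valid for half-integer $\lambda$ as well. I expect the hardest part to be proving the weighted $\ell^2$ inequality uniformly in $N\ge3$ at exactly $\alpha=\frac12$. There, the first few weights nearly degenerate, and one needs sharp bounds on ratios such as $\lambda_k/\lambda_{k+1}$ and on norms $\|C_k^{(\frac{N-1}{2})}\|^2$. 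I would first try to check the inequality directly for $k\ge k_0(N)$ via monotonicity of the weights in $k$, and then handle $k=2,\dots,k_0$ using the stability inequality.
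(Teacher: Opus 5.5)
Your outer architecture agrees with the paper's. For $\frac12<\alpha<1$ you take a minimizer in $\mathcal{L}_{r,N}$, remove the Lagrange multipliers via Wei--Xu, and reduce to showing that a non-constant axially symmetric critical point cannot be stable. You then handle $\alpha=\frac12$ and the equality case afterwards; note that equality at $\alpha=1$ also needs rigidity at $\alpha=1$, which your statement excludes. You even guess the right test direction: $(1-x^2)u'$ corrected by a multiple of $x$.

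But the rigidity step, which is the whole content of the theorem, is only a placeholder, and the mechanism you sketch is not shown to close. With $G=(1-x^2)u'$, the Pohozaev-type identity $\int xGP_{r,N}u\,dw=\frac{N-1}{2}\int G\mathcal{Q}_NG\,dw$ does not give a homogeneous relation of the type $\sum_k c_k|G_k|^2=0$. Set $g\propto(1-x^2)^{\frac{N-2}{2}}e^{Nu}$ with $\int g=1$, $a=\int(1-x^2)g\,dx$, and $A_k=\int(1-x^2)\tilde F_k'g\,dx$. The identity then reads
\[
\sum_{k\ge2}(2k+N-1)A_k^2=Na-(N+1)a^2+\tfrac{2N^2}{N-1}\left(\alpha-\tfrac12\right)\alpha\beta,\qquad a=\tfrac{N}{N+1}(1-\alpha\beta).
\]
This has a term linear in $a$, so no comparison of two quadratic forms in the coefficients can finish the argument. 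What is needed is an upper bound on $S=\sum_{k\ge1}(2k+N-1)A_k^2$ in terms of $a$ that genuinely uses that $g$ is a positive probability density with $\int xg=0$.

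The paper gets this bound from the kernel $K_N(x,y)=\sum_k(2k+N-1)\tilde F_k'(x)\tilde F_k'(y)$. The Gegenbauer product formula gives $K_N(x,x)=\frac{N^2}{(N-1)(1-x^2)}$ and $(1-xy)K_N(x,y)\le\frac{N^2}{N-1}$. Next, write $B_k=\int x(1-x^2)\tilde F_k'g\,dx$ as $\int x\big((1-x^2)\tilde F_k'-A_k\big)g\,dx$, using $\int xg=0$, and apply Cauchy--Schwarz. This lets one eliminate $\sum(2k+N-1)B_k^2$, giving $S<\frac{N^2}{N-1}\frac{a}{2-a}$ and hence $a>\frac{N-2}{N-1}$.

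Your plan (monotone weights for $k\ge k_0(N)$, stability for finitely many low modes) is close to the coefficient-by-coefficient estimates of \cite{GHX2021, LWY2022, GLWY2025}. The paper notes that those estimates do not extend to general $N$, and you give no way for stability to control individual modes.

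Stability also enters differently from how you describe it. The $\ell^2$ estimate does not by itself force $u$ to be constant; for a non-constant solution it only yields $0<t:=\alpha\beta<\frac{2}{N(N-1)}$. The contradiction comes from evaluating the second variation exactly along $\phi=G-\frac{(N+1)t}{1+Nt}x$, which satisfies $\int\phi\,d\mu=\int x\phi\,d\mu=0$. By the commutation identity $P_{r,N}((1-x^2)f')=(1-x^2)(P_{r,N}f)'-NxP_{r,N}f$, $G$ solves
\[
\alpha P_{r,N}G=N!\Big(\frac{e^{Nu}}{\gamma}G+x\Big(1-\frac{e^{Nu}}{\gamma}\Big)\Big).
\]
Hence the unknown quantity $\int G^2\,d\mu$ cancels, and $\frac{1}{N!}\delta^2J_{\alpha,N}(u)[\phi,\phi]$ becomes an explicit function $H_N(\alpha,t)$. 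This function is convex in $\alpha$ and negative at $\alpha=\frac12$ and $\alpha=1$ for $t$ in that range. These two ingredients --- the kernel-based upper bound and the exact second variation along $\phi$ --- are what your proposal is missing.
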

\begin{remark}
In view of \cite[Proposition 1.5]{GLWY2025}, the constant $\frac{1}{2}$ is sharp when $N$ is even. Proposition \ref{optimalodd} below shows $\frac{1}{2}$ is also sharp when $N$ is odd.
\end{remark}
\begin{proposition}\label{optimalodd}
Let $N\geq 3$ be an odd integer. If $\mathcal{I}_{\alpha,N}(u)\ge 0$ for all $u\in \mathcal{L}_{r,N}$, then $\alpha\ge \frac{1}{2}$.
\end{proposition}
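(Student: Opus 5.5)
The plan is a second-variation argument around the constant function $u=0$ in a direction that lies in the second spherical-harmonic eigenspace; this is the axially symmetric analogue of the even-dimensional argument in \cite[Proposition 1.5]{GLWY2025}. Take $\phi(x)=C_2^{\lambda}(x)$ with $\lambda=\frac{N-1}{2}$, the degree-two Gegenbauer polynomial. It is orthogonal to $1$ and to $x$ with respect to the weight $(1-x^2)^{\frac{N-2}{2}}$. It is also an eigenfunction of $P_{r,N}$, with eigenvalue $\lambda_2=\frac{\Gamma(N+2)}{\Gamma(2)}=\frac{(N+1)!}{1}$, since on $\mathbb{S}^N$ we have $P_N Y_k=\frac{\Gamma(N+k)}{\Gamma(k)}Y_k$. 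This formula holds for odd $N$ as well: the factor $\left(-\Delta+(\frac{N-1}{2})^2\right)^{1/2}$ acts on $Y_k$ as $k+\frac{N-1}{2}$, and the product over $k$ telescopes to $\frac{\Gamma(N+k)}{\Gamma(k)}$.

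The difficulty is that $t\phi$ does not lie in $\mathcal{L}_{r,N}$. So I would construct a curve $u_t=t\phi+s(t)x+c(t)$, with $c(t)$ an irrelevant additive constant and $s(t)$ chosen by the implicit function theorem so that $\int_{-1}^1 x(1-x^2)^{\frac{N-2}{2}}e^{Nu_t}dx=0$. Expanding the constraint gives $s(t)=O(t^2)$: the linear term in $t$ vanishes because $\int x\phi\,d\mu=0$, and the derivative in $s$ is $N\int x^2 d\mu\neq0$. Consequently the $s$-contribution to $\mathcal{I}_{\alpha,N}(u_t)$ is $O(t^3)$ at the quadratic level. The term $s\cdot t$ cross terms also vanish to order $t^2$, by orthogonality of $x$ and $\phi$ in the quadratic form.

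Next I would expand $\mathcal{I}_{\alpha,N}(u_t)$ to second order in $t$. Write $d\mu$ for the normalized measure $\frac{\Gamma(\frac{N+1}{2})}{\sqrt\pi\Gamma(\frac N2)}(1-x^2)^{\frac{N-2}{2}}dx$. Since $\int\phi\,d\mu=0$, the expansion is
\[
\log\int e^{Nu_t}d\mu=\frac{N^2t^2}{2}\int\phi^2d\mu+O(t^3).
\]
Up to the common normalization this gives
\[
\mathcal{I}_{\alpha,N}(u_t)=\frac{t^2}{2}\Big(\alpha\lambda_2-(N-1)!\,N\Big)\int\phi^2d\mu+O(t^3)
=\frac{t^2}{2}(N-1)!\,N\big(\alpha(N+1)-1\big)\int\phi^2 d\mu+O(t^3).
\]
This bound is positive for $\alpha>\frac{1}{N+1}$, so it does not give the threshold $\frac12$. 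The second variation at the constant function therefore only detects $\frac{1}{N+1}$; it is the same bifurcation threshold as in \cite{GHW2022}. The sharpness of $\frac12$ must instead come from a non-perturbative family of test functions. This is the main obstacle.

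For that family I would follow the standard two-bubble construction. Take $u_\varepsilon=\log$ of the conformal factor of a concentrating configuration with two equal bubbles at the north and south poles. Concretely, let $v_{\varepsilon}(x)=\log\frac{\varepsilon}{1-(1-\varepsilon^2)^{1/2}x}$-type bubbles (solutions of $P_Nv+(N-1)!=(N-1)!e^{Nv}$), and set $u_\varepsilon=\frac1N\log\big(e^{Nv_\varepsilon(x)}+e^{Nv_\varepsilon(-x)}\big)$. This function is even, so it lies in $\mathcal{L}_{r,N}$ automatically. I would then estimate each term as $\varepsilon\to0$:
\[
\frac{\alpha}{2}\int uP_Nu\approx \alpha\cdot 2\cdot\frac{(N-1)!}{N}\cdot N\cdot\frac{|\log\varepsilon|}{1}\cdot c,\qquad
(N-1)!\int u\approx-(N-1)!\,|\log\varepsilon|\cdot c',\qquad
\log\int e^{Nu}=O(1).
\]
The key computation is the leading coefficient in $|\log\varepsilon|$. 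A single bubble has energy coefficient exactly balancing at $\alpha=1$. For two bubbles the Dirichlet-type energy of $\log(e^{Nv}+e^{Nv(-\cdot)})$ is asymptotically twice that of one bubble localized on each hemisphere, with the interaction negligible. The mean term, however, is of order $-|\log\varepsilon|$ with the same coefficient as for one bubble, because $u_\varepsilon\approx\log\varepsilon$ on most of the sphere. The net coefficient is therefore proportional to $(2\alpha-1)$, and it is negative when $\alpha<\frac12$, giving $\mathcal{I}_{\alpha,N}(u_\varepsilon)\to-\infty$.

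Verifying these asymptotics for odd $N$ is the hard step, because $P_{r,N}$ is nonlocal. To do it I would expand $u_\varepsilon$ in Gegenbauer polynomials and use the explicit eigenvalues $\frac{\Gamma(N+k)}{\Gamma(k)}$. Concretely, $\int uP_Nu=\sum_k\frac{\Gamma(N+k)}{\Gamma(k)}|\hat u_k|^2/h_k$. The coefficients of $\frac1N\log(\text{single bubble})$ are explicit, of order $\frac{(1-\varepsilon)^k}{k}$ up to constants. The two-bubble function has only even-$k$ coefficients, approximately twice those of one bubble, with an error summable uniformly in $\varepsilon$. This yields the leading term $2\times$(single-bubble $\log$ term), which completes the argument.
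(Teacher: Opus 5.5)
Your strategy matches the paper's once you set aside the second-variation computation. That computation is correct, and you rightly discard it because it only detects $\frac{1}{N+1}$. The common argument uses two antipodal bubbles, which give an even function and hence an element of $\mathcal{L}_{r,N}$. Their $P_{r,N}$-energy is twice the one-bubble energy, while the linear and logarithmic terms only see one bubble, so $\mathcal{I}_{\alpha,N}$ is $(2\alpha-1)$ times a positive multiple of $|\log\varepsilon|$, up to $O(1)$.

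Your $v_\varepsilon$ is exactly the paper's $\varphi_t^+$ with $\varepsilon=\frac{1-t^2}{1+t^2}$. Your log-sum-exp function equals the paper's $u_t=\varphi_t^++\varphi_t^--2\log(1-t^2)$ plus a constant plus $D(x)=\frac1N\log\big((1+sx)^N+(1-sx)^N\big)$, where $s=\sqrt{1-\varepsilon^2}$. Since $2\le (1+sx)^N+(1-sx)^N\le 2^N$, $D$ is bounded in every $C^m$ uniformly in $\varepsilon$. Hence the two test functions have the same $\mathcal{I}_{\alpha,N}$ up to $O(1)$, and this identity is what justifies your claim that the error is ``summable uniformly in $\varepsilon$''.

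The step your sketch does not prove is the ``interaction negligible'' claim, which is the only nontrivial point for odd $N$. Write $v_\varepsilon=\sum_k d_kF_k$, so that $\int_{\mathbb{S}^N}v_\varepsilon P_{r,N}v_\varepsilon\,dw=(N-1)!\sum_k r_kd_k^2$ with $r_k=\frac{k(k+N-1)}{2k+N-1}$. Doubling the even coefficients and killing the odd ones gives $4\sum_{k\ \mathrm{even}}r_kd_k^2$. This equals twice the one-bubble energy up to $O(1)$ only if $\sum_k(-1)^k r_kd_k^2=O(1)$, and that alternating sum is exactly $\frac{1}{(N-1)!}\int_{\mathbb{S}^N}v_\varepsilon(x)P_{r,N}v_\varepsilon(-x)\,dw$. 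Knowing $d_k$ only as ``$(1-\varepsilon)^k/k$ up to constants'' cannot give this, because bounded but $k$-dependent factors could move energy of order $|\log\varepsilon|$ between even and odd modes. You would need something sharper, for example that $k\mapsto r_kd_k^2$ is nonincreasing, so the alternating sum is bounded by its first term. That monotonicity can be extracted from the Rodrigues representation $d_k\propto t^k\int_{-1}^1(1-x^2)^{k+\frac N2-1}(1+t^2-2tx)^{-k}dx$ by comparison with $t=1$, but it is a real additional argument that your plan does not contain.

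The paper avoids spectral asymptotics altogether. Each $\varphi_t^\pm$ solves $P_{r,N}\varphi=(N-1)!(e^{N\varphi}-1)$, so testing this equation against $\varphi_t^\pm$ does the work:
\begin{itemize}
\item The self-energy becomes $(N-1)!\int_{-1}^1(1-x^2)^{\frac N2-1}v_t^+e^{N\varphi_t^+}\,dx+O(1)$, which is evaluated exactly by the M\"obius substitution $y=T_t(x)$.
\item The cross term becomes $(N-1)!\int_{-1}^1(1-x^2)^{\frac N2-1}v_t^+(e^{N\varphi_t^-}-1)\,dx$. This is $O(1)$ because $v_t^+=-\log(1+t^2-2tx)$ is bounded on $\{x\le 0\}$, and $e^{N\varphi_t^-}\le(1-t^2)^N$ on $\{x\ge 0\}$.
\end{itemize}
Using this device on $u_t$, and then adding $D$ back, in place of your Gegenbauer step closes your argument, and the nonlocality of $P_{r,N}$ then plays no role.
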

We postpone its proof to Appendix \ref{optimal}.

Furthermore, a critical point $u$ of $\mathcal I_{\alpha,N}$ restricted to the set $\mathcal{L}_{r,N}$ solves
\begin{equation}\label{axial}
\alpha P_{r,N}u+(N-1)!-\frac{(N-1)!}{\gamma}e^{Nu}=0,\ x\in(-1,1),
\end{equation}
where
\begin{equation*}
\gamma=\int_{\mathbb{S}^N}e^{Nu} dw.
\end{equation*}

To prove Theorem \ref{main} for $N=4,6,8$, previous papers \cite{GHX2021, GHW2022, GLWY2025, LWY2022} showed \eqref{axial} only admits constant solutions within $\mathcal{L}_{r,N}$ in corresponding dimensions. They used the lower bound of an auxiliary quantity $D_N$ to generate a series of inequalities and proved $a=\frac{N}{N+1}(1-\alpha\beta)\leq \frac{d_0}{\lambda_n}$ for some $d_0$ and $\lambda_n=O(n^2)$ by induction. Here $\beta$ is the first Gegenbauer coefficient of $G=(1-x^2)u'$. Instead, with a weighted $\ell^{2}$ estimate of the Gegenbauer coefficients of $G$, we are able to derive a lower bound of $a$.

On the other hand, the previous works \cite{GHX2021, GHW2022, GLWY2025, LWY2022} adopt an integration-by-parts method to estimate the cubic term of $G=(1-x^2)u'$
\begin{equation*}
I_N=(-1)^{\frac{N}{2}}\int_{-1}^{1}(1-x^2)^{\frac{N-2}{2}}G^2[(1-x^2)^{\frac{N-2}{2}}G]^{(N-1)} dx
\end{equation*}
to prove every solution to \eqref{axial} is a constant. However, it becomes increasingly involved as the dimension grows. 

To avoid such dimension-dependent complexity to prove every critical point of $\mathcal{I}_{\alpha,N}$ is constant, inspired by the work of Frank and Lieb \cite{FrankLieb2012},  we find that it is enough to show that every non-constant critical point is unstable. This is one of the key ingredients. The relevant statement is as follows.
\begin{theorem}\label{instability}
Let $N\geq 3$ and $\frac{1}{2}\leq \alpha\leq 1$. If a smooth critical point of the functional $\mathcal I_{\alpha, N}$ restricted to $\mathcal{L}_{r,N}$ is a non-constant function, then it is unstable. Consequently, every stable critical point restricted to $\mathcal{L}_{r,N}$ is constant.
\end{theorem}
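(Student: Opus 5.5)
The plan follows the Frank--Lieb idea: test the second variation of $\mathcal I_{\alpha,N}$ at a non-constant critical point $u$ against the directions generated by the conformal group. By the Kazdan--Warner argument recalled above, for $\frac12\le\alpha\le1$ the Lagrange multipliers vanish, so $u$ solves \eqref{axial}. The second variation at $u$, in a direction $\phi$ tangent to $\mathcal{L}_{r,N}$, is
\begin{equation*}
Q(\phi)=\alpha\int_{\mathbb S^N}\phi P_N\phi\,dw-\frac{N(N-1)!}{\gamma}\left(\int_{\mathbb S^N}e^{Nu}\phi^2dw-\frac{1}{\gamma}\Big(\int_{\mathbb S^N}e^{Nu}\phi\, dw\Big)^2\right).
\end{equation*}
In the axial setting, the natural test function comes from the axial conformal dilation, i.e.\ the vector field $(1-x^2)\frac{d}{dx}$ acting on $u$, together with the conformal factor. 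Concretely, I take $\phi=(1-x^2)u'+\frac{1}{N}\cdot$(derivative of the log conformal factor), equivalently $\phi=\frac{G}{1}-c\,x$ with $G=(1-x^2)u'$ and a constant $c$ chosen so that the linearized center-of-mass constraint $\int x(1-x^2)^{\frac{N-2}{2}}e^{Nu}\phi\,dx=0$ holds. Since the constraint is one-dimensional in the axial class, a second free parameter (a multiple of $x$, or of the first Gegenbauer polynomial) lets me also fix the orthogonality. If the quadratic form is instead computed on the linearized constraint space directly, I will restrict to $\phi=G+t\,x$ and minimize over $t$.

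The key computation is $Q(G)$. Differentiating \eqref{axial} along the conformal field, and using that $P_N$ intertwines with conformal dilations (the covariance law $P_{N,\tilde g}=e^{-N\omega}P_{N,g}$ at the infinitesimal level), gives an identity
\begin{equation*}
\alpha P_{r,N}G-\frac{N(N-1)!}{\gamma}e^{Nu}G=-(1-\alpha)\,\mathcal{C}(u)+\text{(terms involving }x),
\end{equation*}
where $\mathcal C(u)$ comes from the commutator $[P_N,(1-x^2)\frac{d}{dx}]$. That commutator equals a multiple of $x$ times $P_N$ plus lower order, by conformal covariance, which for $N$ odd I would verify through the spectral representation on Gegenbauer polynomials $C_k^{(N-1)/2}$, where $P_N$ has eigenvalues $\lambda_k=\frac{\Gamma(k+N)}{\Gamma(k)}$. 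Pairing with $G$ and using \eqref{axial} again to eliminate $e^{Nu}$ expresses $Q(G+tx)$ as $-(1-\alpha)\cdot(\text{positive quantity})$ plus a term that is strictly negative unless $u$ is constant. For $\alpha=1$ the conformal invariance makes the leading term vanish, and I must still exhibit negativity through the correction in the $x$-direction.

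The main obstacle is the nonlocality for odd $N$. The commutator identity cannot be obtained by integrating by parts on an explicit ODE. I would therefore expand $u$ and $G$ in Gegenbauer series, use the three-term recurrences $xC_k=\ldots$ and $(1-x^2)\frac{d}{dx}C_k=\ldots$, and check that the resulting weighted sums, with weights $\lambda_k$ and $\lambda_{k\pm1}$, telescope into a sign-definite expression. Sign-definiteness rests on $\lambda_{k+1}-\lambda_k$ being increasing, together with $\alpha\ge\frac12$. If the constrained minimum over $t$ gives $Q<0$ whenever $G\not\equiv0$, then the critical point is unstable. Otherwise $u$ is constant, which yields the consequence stated in Theorem~\ref{instability}.
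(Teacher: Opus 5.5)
Your setup matches the paper's. The test direction is $\phi=G-cx$ with $G=(1-x^2)u'$, and the commutator is checked on Gegenbauer polynomials via \eqref{recurence1}--\eqref{recurence2}, which is Lemma \ref{PNwf}. There is, however, nothing to minimize over. In the axial class the linearized constraint $\int_{\mathbb S^N}x\phi\,d\mu=0$ is one scalar condition, and constants lie in the kernel of $\delta^2J_{\alpha,N}(u)$. So with $t:=\alpha\beta$ the coefficient $c=\frac{(N+1)t}{1+Nt}$ is forced.

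The commutator is also exact, with no $(1-\alpha)$ remainder. Applied to \eqref{axial} it gives $\alpha P_{r,N}G=N!\bigl(\frac{e^{Nu}}{\gamma}G+x(1-\frac{e^{Nu}}{\gamma})\bigr)$. Together with $\int_{\mathbb S^N}xG\,d\mu=t$, $\int_{\mathbb S^N}x^2\,d\mu=\frac{1+Nt}{N+1}$ and $\int_{\mathbb S^N}xG\,dw=\frac{\beta}{N+1}$, the second variation collapses to a function of $(\alpha,t)$ alone:
\begin{equation*}
\frac{1}{N!}\delta^2J_{\alpha,N}(u)[\phi,\phi]=-t+\frac{t}{\alpha(N+1)}+\frac{(N-1)t^2}{1+Nt}+\frac{\alpha(N+1)t^2}{(1+Nt)^2}.
\end{equation*}
This is not sign-definite on the a priori range $0<t<1$. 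As $t\to1^-$ it tends to $\frac{(1-\alpha)^2}{\alpha(N+1)}>0$ for every $\alpha\in[\frac12,1)$. For $N=3$ and $\alpha=\frac12$ it is negative exactly when $t<\frac13$. So the structure you predict is not there: there is no term of the form $-(1-\alpha)$ times a positive quantity plus a negative term, and convexity of the eigenvalues together with $\alpha\ge\frac12$ cannot force the sign. As proposed, the argument cannot close for any $\alpha<1$.

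\textbf{The missing ingredient} is an independent a priori bound $t=\alpha\beta<\frac{2}{N(N-1)}$ for non-constant solutions. By \eqref{a} this is equivalent to $a>\frac{N-2}{N-1}$, and it is Theorem \ref{bk}, the main new tool of the paper.
\begin{itemize}
\item \emph{Lower bound.} $\sum_{k\ge2}(2k+N-1)A_k^2\ge Na-(N+1)a^2$ follows from \eqref{bbk} and \eqref{D(N-2/2)G}. This is where $\alpha\ge\frac12$ genuinely enters. \eqref{D(N-2/2)G} also gives $t>0$ for non-constant $u$, which you need for strict negativity.
\item \emph{Upper bound.} $\frac{N^2}{N-1}\frac{a}{2-a}-(N+1)a^2$ comes from the kernel $K_N(x,y)=\sum_{k\ge1}(2k+N-1)\tilde F_k'(x)\tilde F_k'(y)$. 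It uses the diagonal value \eqref{KNxx}, the off-diagonal bound \eqref{KNxy} obtained from the Gegenbauer product formula, and a Cauchy--Schwarz step exploiting $\int_{-1}^1xg\,dx=0$.
\end{itemize}
Only with $t<\frac{2}{N(N-1)}$ in hand does the convexity-in-$\alpha$ check of Lemma \ref{tinstability} give $\delta^2J_{\alpha,N}(u)[\phi,\phi]<0$. Without this estimate, or some substitute that quantitatively controls $\beta$, your proposal does not prove instability.
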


\subsection{Outline of the paper}

This paper is organized as follows. In Section \ref{preliminary}, we gather some properties of Gegenbauer polynomials, expand $G$ in terms of Gegenbauer polynomials, and establish some useful integral identities. We also establish the existence and regularity of a minimizer. A weighted $\ell^2$ estimate of Gegenbauer coefficients of $G$ is established in Section \ref{weightedl^2est}. Section \ref{Sec: instability} proves the instability of non-constant functions and proves Theorem \ref{instability}. Proof of Theorem \ref{main} is presented in Section \ref{proofmain}. The optimality of $\alpha=\frac{1}{2}$ is shown in Appendix \ref{optimal}. Appendix \ref{Technicalities} includes some auxiliary technicalities.

\section{Preliminaries}\label{preliminary}
In this section, we collect some properties of Gegenbauer polynomials and some known facts about the equation.

\subsection{Gegenbauer polynomials}

The Gegenbauer polynomial of order $\nu$ and degree $k$ (see \cite{Mori1998}) is given by
\begin{equation}\label{Rodrigues}
C_{k}^{\nu}(x)=\frac{(-1)^k}{2^k k!}\frac{\Gamma(\nu+\frac{1}{2})\Gamma(k+2\nu)}{\Gamma(2\nu)\Gamma(\nu+k+\frac{1}{2})}(1-x^2)^{-\nu+\frac{1}{2}}\frac{d^k}{dx^k} (1-x^2)^{k+\nu-\frac{1}{2}}.
\end{equation}

The derivative of $C_{k}^{\nu}$ satisfies
\begin{equation}\label{201}
\frac{d}{dx}C_{k}^{\nu}(x)=2\nu C_{k-1}^{\nu+1}(x).
\end{equation}

Let $F_k^\nu$ be the normalization of $C_{k}^{\nu}$ such that $F_k^\nu(1)=1$, i.e.
\begin{equation*}
F_k^\nu=\frac{k!\Gamma(2\nu)}{\Gamma(k+2\nu)} C_{k}^{\nu}.
\end{equation*}
Then $F_k^\nu$ satisfies the following differential equation
\begin{equation*}
(1-x^2)(F_{k}^\nu)''-(2\nu+1)x(F_k^\nu)'+k(k+2\nu)F_k^\nu=0,
\end{equation*}
and \eqref{201} becomes
\begin{align*}
(F_{k}^\nu)'=\frac{k(k+2\nu)}{2\nu+1}F_{k-1}^{\nu+1}.
\end{align*}


On $\mathbb{S}^N$, the corresponding Gegenbauer polynomial is $C_{k}^{\frac{N-1}{2}}$. To simplify notation, in what follows we will write $F_k$ for $F_k^\frac{N-1}{2}$, and there should be no danger of confusion.

The function $F_k$ satisfies the orthogonality condition
\begin{equation}\label{Fknorm}
\begin{aligned}
    \int_{-1}^{1}(1-x^2)^{\frac{N-2}{2}}F_{k}F_ldx&=\frac{2^{N-1}\Gamma(\frac{N}{2})^2 \Gamma(k+1)}{(k+N-2)!(2k+N-1)}\delta_{kl},
\end{aligned}    
\end{equation}
where $\lambda_k=k(k+N-1)$. Furthermore, it is an eigenfunction of the Paneitz operator:
\begin{equation}\label{Paneitzeigen}
P_{r,N} F_k=
\begin{cases}
\frac{\Gamma(k+N)}{\Gamma(k)}F_k,&\qquad k\geq 1,\\
0,&\qquad k=0.
\end{cases}
\end{equation}

Another useful function is $F_k'$. For simplicity, in the rest of the paper, we normalize $\Tilde F_k'$ as 
\begin{equation*}
\Tilde{F}_{k}'=\frac{N}{\lambda_k}F_k'=\frac{k   !\Gamma(N+1)}{\Gamma(k+N-1)\lambda_k}C_{k-1}^{\frac{N+1}{2}}
\end{equation*}
so that $\Tilde{F}_{k}'(1)=1$.

The Gegenbauer polynomials $F_k$ and $\tilde F_k'$ also satisfy the following recurrence formulas
\begin{align}
xF_k=&\frac{k}{2k+N-1}F_{k-1}+\frac{k+N-1}{2k+N-1}F_{k+1}, \label{recurence1}\\
(1-x^2)F_k'=&\frac{k(k+N-1)}{2k+N-1}(F_{k-1}-F_{k+1}) \label{recurence2}
\end{align}


The following Gegenbauer product formula will be useful in several kernel estimates; see \cite[Eq.~(18.17.5)]{Olver2010}.
\begin{lemma}\label{prodform}
For any $k\geq 1$, we have
\begin{equation*}
    \tilde F_k'(x)\tilde F_k'(y)=\frac{\Gamma(\frac{N+2}{2})}{\sqrt{\pi}\Gamma(\frac{N+1}{2})}\int_{-1}^{1}\tilde F_k'\left(xy+t\sqrt{(1-x^2)(1-y^2)}\right)(1-t^2)^{\frac{N-1}{2}}dt.
\end{equation*}
\end{lemma}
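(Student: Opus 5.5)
The identity is the classical Gegenbauer product formula for $C_{k-1}^{\lambda}$ with $\lambda=\frac{N+1}{2}$. Since $\tilde F_k'$ is a constant multiple of $C_{k-1}^{\frac{N+1}{2}}$ normalized by $\tilde F_k'(1)=1$, the constant in front is irrelevant. I plan to prove it directly through zonal spherical harmonics on a sphere of dimension $d-1$, where $\frac{d-2}{2}=\lambda$, i.e.\ $d=N+3$, so the sphere is $\mathbb{S}^{N+2}\subset\mathbb{R}^{N+3}$.

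\textbf{Step 1.} Set $n=k-1$ and $\Phi(s)=\tilde F_k'(s)$. By the standard theory of spherical harmonics, for each fixed $\eta\in\mathbb{S}^{N+2}$ the function $\xi\mapsto\Phi(\langle\xi,\eta\rangle)$ is the (normalized) zonal spherical harmonic of degree $n$ with pole $\eta$. Fix the pole $e\in\mathbb{S}^{N+2}$ and $x,y\in[-1,1]$. Consider the subsphere
\begin{equation*}
\Sigma_y=\{\eta\in\mathbb{S}^{N+2}:\ \langle\eta,e\rangle=y\},
\end{equation*}
equipped with its normalized rotation-invariant measure $d\sigma$. Define
\begin{equation*}
A(\xi)=\int_{\Sigma_y}\Phi(\langle\xi,\eta\rangle)\,d\sigma(\eta).
\end{equation*}

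\textbf{Step 2.} The function $A$ is an average of degree-$n$ spherical harmonics, so it is again a degree-$n$ spherical harmonic. Moreover, $A$ is invariant under all rotations fixing $e$, because $\Sigma_y$ and $d\sigma$ are. The space of such invariant harmonics is one-dimensional and spanned by $\Phi(\langle\xi,e\rangle)$. Hence $A(\xi)=c\,\Phi(\langle\xi,e\rangle)$ for some constant $c$. Evaluating at $\xi=e$ gives $\langle e,\eta\rangle=y$ for every $\eta\in\Sigma_y$. Therefore $A(e)=\Phi(y)$, and since $\Phi(1)=1$, we get $c=\Phi(y)$. This yields
\begin{equation*}
A(\xi)=\Phi(x)\Phi(y)\qquad\text{whenever }\langle\xi,e\rangle=x.
\end{equation*}

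\textbf{Step 3.} It remains to compute $A(\xi)$ explicitly. Decompose
\begin{equation*}
\xi=xe+\sqrt{1-x^2}\,\xi',\qquad \eta=ye+\sqrt{1-y^2}\,\omega,
\end{equation*}
where $\xi'$ and $\omega$ are unit vectors orthogonal to $e$. Then $\omega$ ranges uniformly over the sphere $\mathbb{S}^{N+1}$ in $e^\perp\cong\mathbb{R}^{N+2}$, and
\begin{equation*}
\langle\xi,\eta\rangle=xy+t\sqrt{(1-x^2)(1-y^2)},\qquad t=\langle\xi',\omega\rangle.
\end{equation*}
The pushforward of the uniform measure on $\mathbb{S}^{N+1}$ under $\omega\mapsto\langle\xi',\omega\rangle$ has density proportional to $(1-t^2)^{\frac{(N+2)-3}{2}}=(1-t^2)^{\frac{N-1}{2}}$ on $[-1,1]$.

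\textbf{Step 4.} The normalizing constant follows from the beta integral
\begin{equation*}
\int_{-1}^1(1-t^2)^{\frac{N-1}{2}}dt=\frac{\sqrt{\pi}\,\Gamma(\frac{N+1}{2})}{\Gamma(\frac{N+2}{2})}.
\end{equation*}
Combining this with Steps 2 and 3 gives exactly the stated formula.

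The only step needing care is Step 2: that the averaged function is still a degree-$n$ harmonic, and that invariant harmonics of a fixed degree form a one-dimensional space. Both are standard facts, and this is where I would cite the theory of zonal harmonics. One could instead simply cite \cite[Eq.~(18.17.5)]{Olver2010} with $\lambda=\frac{N+1}{2}$ and $n=k-1$, after checking that the normalization $\tilde F_k'(1)=1$ absorbs the factor $C_n^\lambda(1)$.
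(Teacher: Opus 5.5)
Your argument is correct. The paper does not prove this lemma; it only quotes the Gegenbauer product formula \cite[Eq.~(18.17.5)]{Olver2010}, which is your fallback. In that citation one takes $\lambda=\frac{N+1}{2}$ and $n=k-1$, divides by $C_{k-1}^{\frac{N+1}{2}}(1)$, and substitutes $t=\cos\phi$. This turns $(\sin\phi)^{2\lambda-1}d\phi$ into $(1-t^2)^{\frac{N-1}{2}}dt$ and the prefactor $\Gamma(\lambda+\frac12)/(\sqrt{\pi}\,\Gamma(\lambda))$ into the stated constant.

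Your main route instead derives the formula on $\mathbb{S}^{N+2}$. The average of the zonal harmonic over the latitude sphere $\Sigma_y$ is a degree-$(k-1)$ harmonic invariant under the stabilizer of $e$, so it is a multiple of the zonal harmonic, and evaluation at the pole fixes the multiple. The weight $(1-t^2)^{\frac{N-1}{2}}$ and the constant then come from projecting the uniform measure on $\mathbb{S}^{N+1}$ and from the Beta integral.

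The citation is shorter. Your derivation makes the lemma self-contained, modulo two textbook facts on zonal harmonics: degree-$n$ harmonics are closed under averaging, and their stabilizer-invariant subspace is one-dimensional (valid since the ambient dimension $N+3$ is at least $3$). It also explains the dimension shift: the weight is the one for $\mathbb{S}^{N+2}$ rather than $\mathbb{S}^{N}$ because $\tilde F_k'$ is a multiple of $C_{k-1}^{\frac{N+1}{2}}$. And it shows that only the normalization $\tilde F_k'(1)=1$ matters. It handles $x=\pm1$ or $y=\pm1$ trivially as well, though the paper only needs $-1<x,y<1$ for the kernel $K_N$.
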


The product formula above yields the following result about an important kernel, which will be useful in Section \ref{weightedl^2est}.
\begin{lemma}
Let $N\geq 3$. For $-1<x,y<1$, the series
\begin{equation*}
    K_N(x,y):=\sum_{k=1}^{\infty}(2k+N-1) \tilde F_k'(x)\tilde F_k'(y)  
\end{equation*}
converges absolutely and satisfies
\begin{equation}\label{KNform}
K_N(x,y)=\frac{N\Gamma(\frac{N+2}{2})}{\sqrt{\pi}\Gamma(\frac{N+1}{2})}\int_{-1}^{1}\frac{(1-t^2)^{\frac{N-1}{2}}}{1-xy-t\sqrt{(1-x^2)(1-y^2)}}dt.
\end{equation}
Consequently, we have
\begin{equation}\label{KNxx}
    K_N(x,x)=\frac{N^2}{(N-1)(1-x^2)}
\end{equation}
for any $-1<x<1$, and
\begin{equation}\label{KNxy}
    (1-xy)K_N(x,y)\leq \frac{N^2}{N-1}
\end{equation}
for any $-1<x,y<1$. The latter inequality is strict for $x\neq y$.
\end{lemma}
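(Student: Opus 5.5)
We would derive everything from the product formula of Lemma \ref{prodform}, summing against the weights $(2k+N-1)$. Set $z=xy+t\sqrt{(1-x^2)(1-y^2)}\in(-1,1)$ for $-1<x,y<1$ and $|t|<1$. By Lemma \ref{prodform},
\begin{equation*}
\sum_{k=1}^{\infty}(2k+N-1)\tilde F_k'(x)\tilde F_k'(y)=\frac{\Gamma(\frac{N+2}{2})}{\sqrt{\pi}\Gamma(\frac{N+1}{2})}\int_{-1}^{1}\Big(\sum_{k\ge1}(2k+N-1)\tilde F_k'(z)\Big)(1-t^2)^{\frac{N-1}{2}}dt .
\end{equation*}
So the first step is to evaluate the one-variable series $S(z)=\sum_{k\ge1}(2k+N-1)\tilde F_k'(z)$ and show $S(z)=\frac{N}{1-z}$ for $|z|<1$.

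\textbf{Computing $S(z)$.} Since $\tilde F_k'=\frac{N}{\lambda_k}F_k'$, we have $S(z)=N\frac{d}{dz}\sum_{k\ge1}\frac{2k+N-1}{k(k+N-1)}F_k(z)$. We will identify the inner sum as the zonal expansion of a logarithm, using the generating function
\begin{equation*}
\sum_k C_k^{\nu}(z)r^k=(1-2rz+r^2)^{-\nu},\qquad \nu=\tfrac{N-1}{2},
\end{equation*}
together with the identity $\frac{2k+N-1}{k(k+N-1)}=\frac1k+\frac1{k+N-1}$. A cleaner route is to verify the identity in coefficient space. The function $\frac{N}{1-z}$ is not weighted-$L^2$, so we pair against $(1-z)^{\epsilon}$, or expand it via the Rodrigues formula. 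Concretely, we compute the coefficients
\begin{equation*}
\int_{-1}^1(1-z^2)^{N/2}\frac{1}{1-z}C_{k-1}^{\frac{N+1}{2}}(z)\,dz
\end{equation*}
by integrating by parts $k-1$ times with \eqref{Rodrigues}, which reduces to a Beta integral. We then check that these coefficients match $(2k+N-1)$ times the normalizations from \eqref{Fknorm}, transferred to the $\tilde F'$ family via \eqref{201}. This gives $S(z)=N/(1-z)$ in the sense of an Abel or weighted-$L^2$ limit.

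\textbf{Absolute convergence.} This is the main obstacle, because the series defining $S(z)$ does not converge absolutely pointwise: $|\tilde F_k'(z)|\sim k^{-N/2}(1-z^2)^{-N/4}$ roughly, times $k$, which is not summable for $N\le4$. So we will first prove the formula with the Abel factor $r^k$, $r<1$, where everything converges absolutely and interchanging sum and integral is justified. For $x,y\in(-1,1)$ the absolute convergence of $K_N(x,y)$ itself should instead come from the product structure. By Cauchy--Schwarz it suffices to show $\sum_k(2k+N-1)\tilde F_k'(x)^2<\infty$. By the product formula with $y=x$, this is an integral of $S_r(z)$ over $t$ with $z=x^2+t(1-x^2)$, which has a nonnegative integrable limit. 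Monotone convergence as $r\to1$ then gives finiteness provided the terms are nonnegative. They are, since $\tilde F_k'(x)^2\ge0$, which yields both absolute convergence and \eqref{KNform} at $y=x$. For general $x,y$, Cauchy--Schwarz gives absolute convergence, and Abel's theorem lets us pass $r\to1$ in \eqref{KNform}. The integrand is integrable, since $1-z\ge 1-xy-\sqrt{(1-x^2)(1-y^2)}$, which vanishes only when $x=y$, and then only at $t=1$, where $(1-t^2)^{\frac{N-1}{2}}/(1-t)$ is integrable for $N\ge 2$.

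\textbf{Consequences.} For \eqref{KNxx}, put $x=y$. Then $1-z=(1-x^2)(1-t)$, and
\begin{equation*}
\int_{-1}^1(1-t^2)^{\frac{N-1}{2}}(1-t)^{-1}\,dt=\int_{-1}^1(1-t)^{\frac{N-3}{2}}(1+t)^{\frac{N-1}{2}}\,dt
\end{equation*}
is a Beta integral. Combining it with the prefactor gives $\frac{N^2}{(N-1)(1-x^2)}$, which is a routine Gamma-function check. For \eqref{KNxy}, write $a=1-xy>0$ and $b=\sqrt{(1-x^2)(1-y^2)}$. Then $a^2-b^2=(x-y)^2\ge0$, so $s=b/a\le1$, and
\begin{equation*}
(1-xy)K_N=c_N\int_{-1}^1\frac{(1-t^2)^{\frac{N-1}{2}}}{1-st}\,dt .
\end{equation*}
This is increasing in $s\in[0,1]$: its derivative $\int t(1-t^2)^{\frac{N-1}{2}}(1-st)^{-2}\,dt>0$ by pairing $t$ with $-t$. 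Its value at $s=1$ is exactly the diagonal constant $\frac{N^2}{N-1}$. Strictness for $x\neq y$ follows because then $s<1$.
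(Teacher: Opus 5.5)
There is a genuine gap, and it sits at the one step that carries the analytic content of the lemma: passing the limit through the $t$-integral. On the diagonal you have
\begin{equation*}
\sum_{k\ge1}r^k(2k+N-1)\tilde F_k'(x)^2=c_N\int_{-1}^{1}S_r\big(x^2+t(1-x^2)\big)(1-t^2)^{\frac{N-1}{2}}\,dt .
\end{equation*}
Monotone convergence on the left only tells you that the limit as $r\to1$ exists in $[0,\infty]$. To get finiteness and \eqref{KNform}, you need the right-hand side to converge to $c_N\int\frac{N}{1-z}(1-t^2)^{\frac{N-1}{2}}dt$. The phrase ``has a nonnegative integrable limit'' does not give that. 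You have not shown $S_r$ is nonnegative or monotone in $r$. Even if it were, Fatou would only bound the series from below, which is the wrong direction for finiteness.

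The real issue is domination. On the diagonal $z\to1$ as $t\to1$, and $S_r(1)=\sum_k r^k(2k+N-1)\sim 2(1-r)^{-2}$. So neither pointwise Abel convergence nor weighted-$L^2$ convergence of $S_r$ to $N/(1-z)$ justifies the interchange by itself. The same objection applies off the diagonal: Abel's theorem handles the series side, not $\int S_r(z(t))(1-t^2)^{\frac{N-1}{2}}dt$.

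The missing idea, which is how the paper proceeds, is a boundedness statement. For $M=\sqrt{(1-x^2)(1-y^2)}>0$, the map $H\mapsto\int_{-1}^1H(xy+Mt)(1-t^2)^{\frac{N-1}{2}}dt$ is a bounded linear functional on $L^2((-1,1),(1-z^2)^{\frac N2}dz)$. After substituting $z=xy+Mt$, it becomes $\int_p^q H\rho_{x,y}\,dz$, where $\rho_{x,y}=M^{-N}(z-p)^{\frac{N-1}{2}}(q-z)^{\frac{N-1}{2}}$, $p=xy-M$ and $q=xy+M$. One has $\int(1-z^2)^{-\frac N2}\rho_{x,y}^2\,dz<\infty$ because $1+z\ge z-p$ and $1-z\ge q-z$, so Cauchy--Schwarz gives the bound.

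Contrary to what you wrote, $N/(1-z)$ does lie in this weighted $L^2$ space for $N\ge3$, since $(1-z)^{\frac N2-2}(1+z)^{\frac N2}$ is integrable. Your Rodrigues coefficient computation therefore shows that the partial sums converge to it in norm. The bounded functional then yields \eqref{KNform} as the limit of the partial sums, with no Abel factor needed. After that, monotonicity of the diagonal partial sums and Cauchy--Schwarz give absolute convergence exactly as you say.

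Your Abel route could alternatively be rescued by a uniform bound $0\le S_r(z)\le C/(1-z)$, obtainable for instance by writing $S_r$ through the Gegenbauer Poisson kernel. That estimate is absent from your proposal, however.

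Your derivation of \eqref{KNxx} is correct, and so is your derivation of \eqref{KNxy}, including strictness via monotonicity in $s=M/(1-xy)$. These are equivalent to the paper's symmetrization argument.
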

\begin{proof}
We first claim that for any $x\in (-1,1)$, 
\begin{equation}\label{1-xexpand}
    f(x):=\frac{N}{1-x}=\sum_{k=1}^{\infty}(2k+N-1)\tilde F_k'(x).
\end{equation}

We first show that it holds in $L^2((-1,1),(1-x^2)^{\frac{N}{2}}dx)$. Indeed, since $N\geq 3$, we have
\begin{equation*}
f\in L^2((-1,1),(1-x^2)^{\frac{N}{2}}dx).
\end{equation*}

By \eqref{Rodrigues}, we see that
\begin{equation*}
    \tilde F_k'(x)=\frac{(-1)^{k-1}\Gamma(\frac{N}{2}+1)}{2^{k-1}\Gamma(k+\frac{N}{2})}(1-x^2)^{-\frac{N}{2}}\frac{d^{k-1}}{dx^{k-1}} (1-x^2)^{k+\frac{N}{2}-1}.
\end{equation*}
Integrating by parts $k-1$ times, we see that
\begin{equation*}
\begin{aligned}
\int_{-1}^{1}(1-x^2)^{\frac{N}{2}}f(x)\tilde F_k'(x)dx
&=\frac{N(k-1)!\Gamma(\frac{N}{2}+1)}{2^{k-1}\Gamma(k+\frac{N}{2})}\int_{-1}^{1}(1-x^2)^{\frac{N}{2}-1}(1+x)^{k+\frac{N}{2}-1}dx\\
&=\frac{N^2(k-1)!2^{N-1}\Gamma(\frac{N}{2})^2}{\Gamma(k+N)}.    
\end{aligned}
\end{equation*}

Since
\begin{equation*}
    \int_{-1}^{1}(1-x^2)^{\frac{N}{2}}\tilde F_k'(x)^2dx=\frac{2^{N+1}\Gamma(\frac{N}{2}+1)^2(k-1)!}{(2k+N-1)\Gamma(k+N)},
\end{equation*}
we have \eqref{1-xexpand} holds in $L^2((-1,1),(1-x^2)^{\frac{N}{2}}dx)$ by completeness of the Gegenbauer polynomials $\{\tilde F_k\}_{k=1}^{\infty}$ in $L^2((-1,1),(1-x^2)^{\frac{N}{2}}dx)$.

We next justify \eqref{1-xexpand} pointwise. Fix $x,y\in (-1,1)$ and set
$M=\sqrt{(1-x^2)(1-y^2)}$, $p=xy-M$ and $q=xy+M$. Then we have $-1\leq p<q\leq 1$.

Write $z=xy+Mt$, for any measurable function $H$ such that either side below is absolutely integrable, we have
\begin{equation*}
\mathcal{L}_{x,y}H:=\int_{-1}^{1}H(xy+Mt)(1-t^2)^{\frac{N-1}{2}}dt=\int_{p}^{q}H(z)\rho_{x,y}(z)dz,
\end{equation*}
where
\begin{equation*}
\rho_{x,y}(z)=\frac{1}{M}\left(1-\frac{(z-xy)^2}{M^2}\right)_+^{\frac{N-1}{2}}.
\end{equation*}
Here $r_+:=\max\{r,0\}$. For $p<z<q$, since
\begin{equation*}
1-\frac{(z-xy)^2}{M^2}=\frac{(z-p)(q-z)}{M^2},
\end{equation*}
we see that
\begin{equation*}
\rho_{x,y}(z)=M^{-N}(z-p)^{\frac{N-1}{2}}(q-z)^{\frac{N-1}{2}}.
\end{equation*}

Then we have
\begin{equation*}
(1-z^2)^{-\frac{N}{2}}\rho_{x,y}(z)^2=M^{-2N}\frac{(z-p)^{N-1}(q-z)^{N-1}}{(1+z)^{\frac{N}{2}}(1-z)^{\frac{N}{2}}}\leq M^{-2N}(z-p)^{\frac{N}{2}-1}(q-z)^{\frac{N}{2}-1}.  
\end{equation*}

Consequently, we have
\begin{equation*}
\int_{-1}^{1}(1-z^2)^{-\frac{N}{2}}\rho_{x,y}(z)^2dz\leq M^{-2N}\int_p^q(z-p)^{\frac{N}{2}-1}(q-z)^{\frac{N}{2}-1}dz=M^{-2N}(q-p)^{N-1}B(\frac{N}{2},\frac{N}{2})<\infty.
\end{equation*}
Here $B$ is the Beta function.

Then by Cauchy-Schwarz inequality, we have
\begin{equation*}
\left|\int_{p}^{q}H(z)\rho_{x,y}(z)dz \right|\leq \left(\int_{-1}^{1}(1-z)^{\frac{N}{2}}H(z)^2dz\right)^{\frac{1}{2}}\left(\int_{-1}^{1}(1-z^2)^{-\frac{N}{2}}\rho_{x,y}(z)^2dz\right)^{\frac{1}{2}}.
\end{equation*}
That is, $\mathcal{L}_{x,y}H$ is a bounded linear functional on $L^2((-1,1),(1-z^2)^{\frac{N}{2}}dz)$.

Denote
\begin{equation*}
f_m(z):=\sum_{k=1}^{m}(2k+N-1)\tilde F_k'(z).
\end{equation*}
Since $f_m\to f$ in $L^2((-1,1),(1-z^2)^{\frac{N}{2}}dz)$,  applying the inequality above to $f_m-f$, 
we have $\mathcal{L}_{x,y}f_m\to \mathcal{L}_{x,y}f$ as $m\to \infty$.

Since $f_m$ consists of finite summations, we can apply Lemma \ref{prodform} term by term to get
\begin{equation*}
\sum_{k=1}^{m}(2k+N-1) \tilde F_k'(x)\tilde F_k'(y)=\frac{\Gamma(\frac{N+2}{2})}{\sqrt{\pi}\Gamma(\frac{N+1}{2})}\mathcal{L}_{x,y}f_m.
\end{equation*}

Taking $m\to \infty$, we obtain
\begin{equation*}
\sum_{k=1}^{\infty}(2k+N-1) \tilde F_k'(x)\tilde F_k'(y)=\frac{\Gamma(\frac{N+2}{2})}{\sqrt{\pi}\Gamma(\frac{N+1}{2})}\mathcal{L}_{x,y}f=\frac{N\Gamma(\frac{N+2}{2})}{\sqrt{\pi}\Gamma(\frac{N+1}{2})}\int_{-1}^{1}\frac{(1-t^2)^{\frac{N-1}{2}}}{1-xy-Mt}dt,
\end{equation*}
which is \eqref{KNform}.

In particular, setting $x=y$ gives
\begin{equation*}
K_N(x,x)=\sum_{k=1}^{\infty}(2k+N-1) \tilde F_k'(x)^2=\frac{N\Gamma(\frac{N+2}{2})}{\sqrt{\pi}\Gamma(\frac{N+1}{2})}\int_{-1}^{1}\frac{(1-t^2)^{\frac{N-1}{2}}}{(1-x^2)(1-t)}dt=\frac{N^2}{(N-1)(1-x^2)}.
\end{equation*}
Since the partial sums on the left-hand side are nondecreasing, it holds pointwise.

By Cauchy-Schwarz inequality, we have
\begin{equation}\label{absoluteconv}
\sum_{k=1}^{\infty}(2k+N-1) |\tilde F_k'(x)\tilde F_k'(y)|\leq \sqrt{K_N(x,x)K_N(y,y)}=\frac{N^2}{(N-1)M}.
\end{equation}
Hence, $K_N(x,y)$ converges absolutely.

To see \eqref{KNxy}, since $(1-t^2)^{\frac{N-1}{2}}$ is even and $(1-x^2)(1-y^2)<(1-xy)^2$ for $x\neq y$, we have
\begin{equation*}
    \begin{aligned}
        (1-xy)K_N(x,y)
        &=\frac{N\Gamma(\frac{N+2}{2})}{\sqrt{\pi}\Gamma(\frac{N+1}{2})}\int_{-1}^{1}\frac{(1-t^2)^{\frac{N-1}{2}}}{1-\frac{M}{1-xy}t}dt\\
        &=  \frac{N\Gamma(\frac{N+2}{2})}{2\sqrt{\pi}\Gamma(\frac{N+1}{2})}\int_{-1}^{1}\left(\frac{1}{1-\frac{M}{1-xy}t}+\frac{1}{1+\frac{M}{1-xy}t}\right)(1-t^2)^{\frac{N-1}{2}}dt\\
        &=  \frac{N\Gamma(\frac{N+2}{2})}{\sqrt{\pi}\Gamma(\frac{N+1}{2})}\int_{-1}^{1}\frac{1}{1-\frac{M^2}{(1-xy)^2}t^2}(1-t^2)^{\frac{N-1}{2}}dt\\
        &\leq   \frac{N\Gamma(\frac{N+2}{2})}{\sqrt{\pi}\Gamma(\frac{N+1}{2})}\int_{-1}^{1}\frac{1}{1-t^2}(1-t^2)^{\frac{N-1}{2}}dt=\frac{N^2}{N-1}   
    \end{aligned}
\end{equation*}
and the inequality is strict when $x\neq y$.
\end{proof}

\subsection{Existence and regularity of a minimizer}
In this subsection, we prove that the infimum of $J_{\alpha,N}$ over $\mathcal{L}_{r,N}$ is attainable for $\frac{1}{2}<\alpha<1$.

\begin{proposition}\label{minimizerexist}
Let $N\geq 3$ and $\frac{1}{2}<\alpha<1$. Then the infimum of $J_{\alpha,N}$ over $\mathcal{L}_{r,N}$ is attained by an axially symmetric function $u\in H^{\frac{N}{2}}(\mathbb{S}^N)$.
\end{proposition}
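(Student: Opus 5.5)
We use the direct method of the calculus of variations, with the improved Chang--Yang inequality supplying coercivity on $\mathcal{L}_{r,N}$. Let $m:=\inf_{\mathcal{L}_{r,N}}J_{\alpha,N}$. By the result of Chang and Yang quoted in the introduction, $J_{\alpha',N}\ge -C(\alpha',N)$ on $\mathcal{L}_N\supset\mathcal{L}_{r,N}$ for every $\alpha'\ge\frac12$. Hence $m>-\infty$, and $m\le J_{\alpha,N}(0)=0$.

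Fix $\alpha'\in[\frac12,\alpha)$, which is possible because $\alpha>\frac12$. For $u\in\mathcal{L}_{r,N}$ we write
\begin{equation*}
J_{\alpha,N}(u)=\frac{\alpha-\alpha'}{2}\int_{\mathbb{S}^N}u\,P_Nu\,dw+J_{\alpha',N}(u)\geq \frac{\alpha-\alpha'}{2}\int_{\mathbb{S}^N}u\,P_Nu\,dw-C(\alpha',N).
\end{equation*}
Along a minimizing sequence $u_j\in\mathcal{L}_{r,N}$, this bounds the seminorm $\int u_jP_Nu_j\,dw$, and $P_N$ is positive on nonconstant modes by \eqref{Paneitzeigen}. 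The functional and the constraint are invariant under adding constants, so we may normalize $\int_{\mathbb{S}^N}u_j\,dw=0$. Then $u_j$ is bounded in $H^{\frac N2}(\mathbb{S}^N)$, and after passing to a subsequence $u_j\rightharpoonup u$ weakly in $H^{\frac N2}$. Axial symmetry is preserved, because the space of axially symmetric functions is weakly closed.

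The main step is passing to the limit in the nonlinear terms. By Beckner's inequality ($J_{1,N}\ge0$) applied to $p u_j$, we get $\int e^{pNu_j}\,dw\le \exp(C p^2\|u_j\|^2_{H^{N/2}})$, which is bounded for every $p<\infty$. The embedding $H^{\frac N2}\hookrightarrow L^2$ is compact, so $u_j\to u$ in $L^2$ and, after a further subsequence, almost everywhere. Combined with the uniform $L^p$ bounds, Vitali's theorem gives $e^{Nu_j}\to e^{Nu}$ in $L^1$. It follows that $\int e^{Nu_j}\xi_1\,dw\to\int e^{Nu}\xi_1\,dw=0$ (the other components vanish by symmetry), so $u\in\mathcal{L}_{r,N}$; also $\ln\int e^{Nu_j}\to\ln\int e^{Nu}$ and $\int u_j\to\int u$. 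The quadratic form $u\mapsto\int uP_Nu$ is convex and continuous, hence weakly lower semicontinuous. Therefore $J_{\alpha,N}(u)\le\liminf_j J_{\alpha,N}(u_j)=m$, and $u$ attains the infimum.

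The restriction $\alpha<1$ is not needed for existence. It matters for the Euler--Lagrange step that follows, where the Kazdan--Warner argument removes the Lagrange multipliers. The only delicate point above is the uniform exponential integrability, and Beckner's inequality handles it.
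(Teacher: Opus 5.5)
Your proposal is correct and follows essentially the same route as the paper. In both, coercivity comes from splitting $J_{\alpha,N}=J_{\alpha',N}+\frac{\alpha-\alpha'}{2}\int_{\mathbb{S}^N}uP_Nu\,dw$ and using the Chang--Yang lower bound, together with the mean-zero normalization and weak compactness in $H^{\frac{N}{2}}$. The exponential terms are handled by Beckner's inequality applied to $pu_j$ plus Vitali's theorem, and the quadratic form by weak lower semicontinuity. Your extra remarks are accurate and slightly more explicit than the paper's write-up: the constraint passes to the limit through the $L^1$ convergence of $e^{Nu_j}$, and the restriction $\alpha<1$ is not needed for existence.
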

\begin{proof}
Since $J_{\alpha,N}$ and $\mathcal{L}_{r,N}$ are invariant under addition of constants, we may assume
\begin{equation*}
\int_{\mathbb{S}^N}udw=0.
\end{equation*}

Choose $\alpha_0$ with $\frac{1}{2}<\alpha_0<\alpha$. By Chang-Yang \cite{ChangYang1995}, $J_{\alpha_0,N}(u)\geq -C$ for some $C=C(\alpha_0,N)$. Then we have
\begin{equation}\label{Jsplit}
J_{\alpha,N}(u)=J_{\alpha_0,N}(u)
+\frac{\alpha-\alpha_0}{2}\int_{\mathbb{S}^N}uP_{r,N}u dw
\geq-C+\frac{\alpha-\alpha_0}{2}\int_{\mathbb{S}^N}uP_{r,N}u dw.
\end{equation}

Since $J_{\alpha,N}(1)=0$ and $1\in \mathcal{L}_{r,N}$, we can choose a minimizing sequence $u_j$ with 
\begin{equation*}
\int_{\mathbb{S}^N}u_jdw=0.
\end{equation*}
and $J_{\alpha,N}(u_j)\leq 1$. Then \eqref{Jsplit} implies that $\{u_j\}$ is uniformly bounded in $H^{\frac{N}{2}}(\mathbb{S}^N)$. Hence, up to a subsequence, we have
\begin{equation*}
u_j\rightharpoonup u \text{ in }H^{\frac{N}{2}}(\mathbb{S}^N),\qquad  u_j\to u\text{ in }L^q(\mathbb{S}^N)\text{ for any }q>1
\end{equation*}
and $u_j\to u$ almost everywhere for some axially symmetric $u\in H^{\frac{N}{2}}(\mathbb{S}^{N})$.

Choosing $p>1$ and applying Beckner's inequality with $\alpha=1$ to $pu_j$, we have
\begin{equation*}
\log\int_{\mathbb{S}^N}e^{Npu_j}dw\leq \frac{Np^2}{2(N-1)!}\int_{\mathbb{S}^N}u_j P_{r,N}u_jdw.
\end{equation*}
Then $e^{Nu_j}$ is uniformly bounded in $L^p(\mathbb{S}^N)$. Since $u_j\to u$ almost everywhere, by Vitali's convergence theorem, we have
\begin{equation*}
e^{Nu_j}\to e^{Nu}\text{ in }L^1(\mathbb{S}^N).
\end{equation*}

By weak lower semi-continuity of $\int_{\mathbb{S}^N}uP_{r,N}udw$, taking $j\to\infty$, we see that $u$ is a minimizer and $u\in \mathcal{L}_{r,N}$.
\end{proof}

Then we show that $u$ is a smooth function.
\begin{proposition}\label{minimizerreg}
Let $u$ be as given in Proposition \ref{minimizerexist}. Then $u\in C^{\infty}(\mathbb{S}^N)$.
\end{proposition}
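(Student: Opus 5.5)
The plan is to derive the Euler--Lagrange equation for the minimizer $u$ and then bootstrap regularity using the smoothing property of $P_N^{-1}$ together with the exponential integrability coming from Beckner's inequality.

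\emph{Step 1: the Euler--Lagrange equation.}
The constraint set $\mathcal{L}_{r,N}$ is defined by the single functional $\Phi(u)=\int_{-1}^1 x(1-x^2)^{\frac{N-2}{2}}e^{Nu}dx$, whose derivative is $\Phi'(u)v=N\int x(1-x^2)^{\frac{N-2}{2}}e^{Nu}v\,dx$. This derivative does not vanish identically: take $v=x$. Then $\Phi'(u)x=N\int x^2(1-x^2)^{\frac{N-2}{2}}e^{Nu}dx>0$. The map $\Phi$ is $C^1$ on $H^{\frac N2}$ because $e^{Nu}\in L^p$ for every $p$ by Beckner's inequality. By the Lagrange multiplier rule there is a constant $a$ such that, weakly,
\begin{equation*}
\alpha P_{r,N}u+(N-1)!-\frac{(N-1)!}{\gamma}e^{Nu}=a\,x\,e^{Nu}.
\end{equation*}
Here the test functions are arbitrary, not only axially symmetric ones. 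This is justified by averaging over rotations fixing the axis (principle of symmetric criticality), since $J_{\alpha,N}$ and the constraint are invariant under that group.

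I will not need $a=0$ for regularity, although the Kazdan--Warner argument quoted in the introduction gives it. Carrying a bounded factor $a x$ costs nothing.

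\emph{Step 2: bootstrap.}
Write the equation as $P_N u = f$ with
\begin{equation*}
f=\alpha^{-1}\Bigl(\frac{(N-1)!}{\gamma}e^{Nu}+a x e^{Nu}-(N-1)!\Bigr).
\end{equation*}
Since $u\in H^{\frac N2}$, Beckner's inequality applied to $pu$ gives $e^{Nu}\in L^p(\mathbb{S}^N)$ for all $p<\infty$, hence $f\in L^p$ for all $p$. The operator $P_N$ is an elliptic pseudodifferential operator of order $N$, invertible on the orthogonal complement of constants with eigenvalues $\Gamma(k+N)/\Gamma(k)\sim k^N$ by \eqref{Paneitzeigen}. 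Therefore $u-\bar u\in W^{N,p}$ for all $p$, and so $u\in C^{N-1,\beta}$; in particular $u$ is bounded and continuous.

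For odd $N$ I would argue via the Fourier--Gegenbauer multiplier (spherical harmonics) and Calder\'on--Zygmund/Sobolev mapping of $P_N^{-1}$ on $\mathbb S^N$. Alternatively, I would write $P_N=A^{1/2}B$ with $B$ a differential operator and $A^{1/2}$ of order one, and invert each factor separately.

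Once $u\in C^{k,\beta}$, we get $e^{Nu}\in C^{k,\beta}$, hence $f\in C^{k,\beta}$. Schauder-type estimates for $P_N^{-1}$ then give $u\in C^{k+N,\beta}$, where for odd $N$ one uses the H\"older mapping of the order-$(-N)$ pseudodifferential operator. An easier alternative is to work in $H^s$: $f\in H^s\cap L^\infty$ implies $e^{Nu}\in H^s$ by the Moser composition estimate for bounded functions, hence $u\in H^{s+N}$. Induction then gives $u\in C^\infty$.

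\emph{Main obstacle.}
The delicate point is the very first step, getting $f$ into some $L^p$ or $H^s$ space with $s>0$ starting only from $u\in H^{\frac N2}$. This is the critical Sobolev case, where $u$ need not be bounded. It is exactly resolved by the exponential integrability from Beckner's inequality with arbitrary multiple $p$, as already used in the proof of Proposition \ref{minimizerexist}.

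A secondary technical point is handling the nonlocal square-root factor for odd $N$. I would treat it purely spectrally: $\|u\|_{H^{s+N}}\simeq\|P_N u\|_{H^s}+|\bar u|$ follows directly from the eigenvalue asymptotics $\Gamma(k+N)/\Gamma(k)\asymp (1+\lambda_k)^{N/2}$, so no separate pseudodifferential calculus is needed for the $H^s$ bootstrap.
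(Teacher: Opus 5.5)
Your proposal is correct and follows essentially the same route as the paper. Both arguments run: Euler--Lagrange equation, then Beckner's inequality applied to $pu$ to put $e^{Nu}$ (hence the right-hand side) in every $L^p$, then the $W^{N,p}$ elliptic estimate for $P_N$ (pseudodifferential for odd $N$) giving $u\in C^{N-1,\epsilon}$, then a bootstrap.

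Your refinements are all valid and make the argument slightly more self-contained. You keep the multiplier term $a\,x\,e^{Nu}$ rather than invoking Wei--Xu's vanishing result before regularity is known. You justify non-symmetric test functions by averaging over rotations fixing the axis. You also offer a purely spectral $H^s$ bootstrap, though its chain should read $u\in H^s\cap L^\infty\Rightarrow e^{Nu}\in H^s\Rightarrow u\in H^{s+N}$.
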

\begin{proof}
With the differentiability from Lemma \ref{Differentiable} and the vanishing of the Lagrange multipliers proved by Wei-Xu \cite{WX1998}, we see that $u$ is a weak solution of \eqref{axial}.

For any $p>1$, applying Beckner's inequality with $\alpha=1$ to $pu$ and $-pu$, we have $e^{N|u|}\in L^p(\mathbb{S}^N)$. Since
\begin{equation*}
\int_{\mathbb{S}^N}udw=0,
\end{equation*}
elliptic estimates give
\begin{equation}\label{elliptic-Lp}
\|u\|_{W^{N,p}(\mathbb{S}^N)} \leq C_{N,p}\left(\|P_Nu\|_{L^p(\mathbb{S}^N)}+\|u\|_{L^p(\mathbb{S}^N)}\right).
\end{equation}

For odd $N$, \eqref{elliptic-Lp} is the standard estimate for the classical elliptic pseudodifferential operator obtained from the complex-power construction \cite{Seeley1967}, see also \cite{Taylor1991}. For even $N$ it is the usual elliptic estimate. Taking $p>N$, we have $u\in C^{N-1,\epsilon}$ for some $\epsilon\in (0,1)$. A standard bootstrap argument then gives $u\in C^{\infty}(\mathbb{S}^N)$.

\end{proof}

\subsection{Integral identities}
In this subsection, we establish useful integral identities. Before that, we introduce the following auxiliary functions and quantities that will be used frequently in this paper. Define
\begin{equation*}
G(x):=(1-x^2)u',
\end{equation*}
where $u$ is a solution to \eqref{axial}. 

We expand $G$ in terms of Gegenbauer polynomials
\begin{equation}
\label{Gexpand}
G=a_0F_0+\beta x+a_2F_2(x)+\sum_{k=3}^{\infty}a_kF_k(x),
\end{equation}
and define
\begin{equation*}
g:= \frac{\Gamma(\frac{N+1}{2})}{\sqrt{\pi}\Gamma(\frac{N}{2})}(1-x^2)^{\frac{N-2}{2}} \frac{e^{Nu}}{\gamma}=(1-x^2)^{\frac{N-2}{2}} \frac{e^{Nu}}{\int_{-1}^{-1}(1-x^2)^{\frac{N-2}{2}}e^{Nu}dx}
\end{equation*}
and
\begin{equation*}
a:=\int_{-1}^1 (1-x^2)gdx.    
\end{equation*}

Define the positive operator $\mathcal{Q}_N$ spectrally by 
\begin{equation}\label{QNeigen}
\mathcal{Q}_N F_k=\frac{\Gamma(k+N-1)}{\Gamma(k+1)},\qquad k\geq 0.
\end{equation}
Then we have
\begin{equation*}
P_{r,N}=(-\Delta_{\mathbb{S}^N})Q_N.
\end{equation*}

The identity below will be proved in Appendix \ref{Technicalities}.
\begin{lemma}\label{lemXGPNu}
For any smooth axially symmetric function $u$ and $G=(1-x^2)u'$, we have
\begin{equation}\label{xGPNu}
\int_{\mathbb{S}^N}xGP_{r,N}udw=\frac{N-1}{2}\int_{\mathbb{S}^N}GQ_NGdw.
\end{equation}
\end{lemma}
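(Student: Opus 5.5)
The identity \eqref{xGPNu} is bilinear in $u$, so I would prove it by expanding $u$ in Gegenbauer polynomials and comparing coefficients mode by mode. Write $u=\sum_{k\ge0}c_kF_k$. For smooth $u$ the coefficients $c_k$ decay faster than any power of $k$. Both sides of \eqref{xGPNu} are continuous bilinear expressions in $u$ with respect to a sufficiently strong Sobolev norm, so it suffices to treat finite sums $u=\sum_{k\le m}c_kF_k$ and then pass to the limit.

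First I would expand each factor using the recurrences. By \eqref{recurence2},
\[
G=\sum_k c_k\frac{\lambda_k}{2k+N-1}\,(F_{k-1}-F_{k+1}).
\]
Applying \eqref{recurence1} once more shows that $xG$ is a combination of $F_{k-2},F_k,F_{k+2}$ for each mode $k$. By \eqref{Paneitzeigen}, $P_{r,N}u=\sum_k c_k\frac{\Gamma(k+N)}{\Gamma(k)}F_k$. By \eqref{QNeigen}, $\mathcal Q_NG$ is obtained from $G$ by multiplying the coefficient of $F_j$ by $\frac{\Gamma(j+N-1)}{\Gamma(j+1)}$. Using the orthogonality relation \eqref{Fknorm}, both sides become quadratic forms $\sum_{k,l}M_{kl}c_kc_l$ whose entries vanish unless $|k-l|\in\{0,2\}$:

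\begin{itemize}
\item On the left side, the $F_{k\pm2}$ and $F_k$ components of $xG_k$ pair with $F_l$ in $P_{r,N}u$.
\item On the right side, $G_k$ lives on $F_{k\pm1}$ and $G_l$ lives on $F_{l\pm1}$, so they pair only when $l\in\{k,k\pm2\}$.
\end{itemize}

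The left-hand form is not symmetric, so I would symmetrize it by replacing $M_{kl}$ with $\tfrac12(M_{kl}+M_{lk})$ before comparing it with the right-hand side. The proof then reduces to two explicit identities among Gamma functions:
\begin{itemize}
\item a diagonal identity, for the coefficient of $c_k^2$;
\item an off-diagonal identity, for the coefficient of $c_kc_{k+2}$.
\end{itemize}
Each is a rational identity in $k$ once the norms $\|F_j\|^2$ from \eqref{Fknorm} are written out. The modes $k=0,1$ (where $F_{k-1}$ or $F_{k-2}$ is absent) need a separate check. For $k=0$ this is trivial, since $G$ has no contribution from $F_0$.

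As a cross-check, and as a possibly cleaner alternative route, I would use $P_{r,N}=(-\Delta)\mathcal Q_N$ together with the radial formula
\[
-\Delta v=-(1-x^2)^{-\frac{N-2}{2}}\big((1-x^2)^{\frac N2}v'\big)'.
\]
Integrating by parts once gives
\[
\int xG\,P_{r,N}u\,dw=c\int (1-x^2)^{\frac N2}(xG)'(\mathcal Q_Nu)'\,dx ,
\]
with $c$ the normalizing constant. This might reduce the identity to a commutator relation between multiplication by $x$ and $\mathcal Q_N$ acting on $(1-x^2)u'$.

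I expect the main obstacle to be the Gamma-function bookkeeping in the off-diagonal coefficient. There the weights $\frac{k}{2k+N-1}$ and $\frac{k+N-1}{2k+N-1}$ from \eqref{recurence1}, the factors $\frac{\lambda_k}{2k+N-1}$ from \eqref{recurence2}, the eigenvalue ratios and the norms \eqref{Fknorm} must all telescope. I would first verify both identities numerically for small $N$ (say $N=3,4$) to confirm the constant $\frac{N-1}{2}$, and then simplify symbolically by factoring out the common factor $\frac{(k-1)!}{(k+N)!}$-type terms.
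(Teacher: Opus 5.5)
Your proposal is correct and follows the same route as the paper: expand $u=\sum d_kF_k$, use \eqref{recurence1}--\eqref{recurence2} to get the coefficients of $G$ and $xG$, then match the symmetrized coefficients of $d_k^2$ and $d_kd_{k+2}$ on both sides (there are no other cross terms). The bookkeeping you expect to be the main obstacle is actually light: using $\frac{\Gamma(k+N-1)}{\Gamma(k+1)}\int_{\mathbb{S}^N}F_k^2\,dw=\frac{(N-1)!}{2k+N-1}$ and $\frac{\Gamma(k+N)}{\Gamma(k)}\int_{\mathbb{S}^N}F_k^2\,dw=(N-1)!\,r_k$ with $r_k=\frac{k(k+N-1)}{2k+N-1}$, the two checks reduce to $\frac{k+N-2}{2k+N-3}-\frac{k+1}{2k+N+1}=\frac{N-1}{2}\bigl(\frac{1}{2k+N-3}+\frac{1}{2k+N+1}\bigr)$ and $\frac{k+1}{2k+N+1}-\frac{k+N}{2k+N+1}=-\frac{N-1}{2k+N+1}$.
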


Then we have the following integral identities.
\begin{lemma}
Let $a$, $g$, and $G$ be as above and $u$ be an axially symmetric solution of \eqref{axial}. Then for every $N\geq 3$, we have $a_0=0$ and the following integral identities
\begin{equation}\label{F1 beta}
    \int_{\mathbb{S}^N}xGdw=\frac{\beta}{N+1},
\end{equation}
\begin{equation}\label{a}
    a=\int_{-1}^{1}(1-x^2)gdx=\frac{N}{N+1}(1-\alpha\beta),
\end{equation}
\begin{equation}\label{bbk}
    \int_{-1}^{1}(1-x^2)^{\frac{N-2}{2}}F_k Gdx=-\frac{2^{N-1}\Gamma(\frac{N}{2})^2 \Gamma(k)}{\alpha \Gamma(k+N)}\int_{-1}^{1}(1-x^2)gF_{k}'dx,\text{ }k\geq 2,
\end{equation}
\begin{equation}\label{D(N-2/2)G}
\int_{\mathbb{S}^N}G\mathcal{Q}_NGdw=\frac{2(N-2)!}{N+1}\left(N+1-\frac{1}{\alpha}\right)\beta,
\end{equation}  
\end{lemma}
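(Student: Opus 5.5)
Everything will come from testing \eqref{axial} against suitable functions with the weight $(1-x^2)^{\frac{N-2}{2}}$, i.e.\ against $dw$, together with the self-adjointness of $P_{r,N}$ and $\mathcal{Q}_N$, the Gegenbauer facts \eqref{Fknorm}, \eqref{Paneitzeigen}, \eqref{recurence1}, \eqref{recurence2}, and Lemma \ref{lemXGPNu}. Rewrite \eqref{axial} as
\begin{equation*}
\alpha P_{r,N}u=(N-1)!\Bigl(\tfrac{e^{Nu}}{\gamma}-1\Bigr),
\end{equation*}
so that integrating any test function $\phi$ against $dw$ on the right side produces $(N-1)!\bigl(\int_{-1}^1 g\phi\,dx-\int\phi\,dw\bigr)$.

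\textbf{Step 1: $a_0=0$ and \eqref{F1 beta}.} For $k\ge 1$, $\int_{-1}^1(1-x^2)^{\frac{N-2}{2}}F_k'\,dx=0$ by orthogonality of $F_k'\propto F^{\frac{N+1}{2}}_{k-1}$ against constants when $k\ge 2$. Hence the coefficient $a_0$ is computed by
\begin{equation*}
\int (1-x^2)^{\frac{N-2}{2}}(1-x^2)u'\,dx=\int(1-x^2)^{\frac N2}u'\,dx=N\int x(1-x^2)^{\frac{N-2}{2}}u\,dx,
\end{equation*}
using integration by parts. To show this vanishes, I would test \eqref{axial} with $x=F_1$. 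Since $P_{r,N}F_1=N!\,F_1$ and $\int x\,dw=0$, this gives $\alpha N!\int xu\,dw=(N-1)!\int xg\,dx=0$, where the last equality is the center-of-mass condition $u\in\mathcal L_{r,N}$. Therefore $\int xu\,dw=0$ and $a_0=0$. Then \eqref{F1 beta} is simply the $F_1$-coefficient relation: $\int xG\,dw=\beta\int x^2\,dw=\beta/(N+1)$.

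\textbf{Step 2: \eqref{a} and \eqref{bbk}.} Test \eqref{axial} with $\phi$ and integrate the $g$-term by parts, using $(g/(1-x^2)^{\frac{N-2}{2}})'=Nu'\,g/(1-x^2)^{\frac{N-2}{2}}$, which relates $\int g\,(1-x^2)\phi'\cdot u'$-type terms to $G$. Concretely, take $\phi=(1-x^2)F_k'$. By \eqref{recurence2} this is a combination of $F_{k\pm1}$, so the left side is explicit in $\alpha$ and the coefficients $a_{k\pm1}$. A cleaner route is to test with $\phi$ such that $\int g\phi'(1-x^2)$ appears directly. For \eqref{bbk}, I would multiply \eqref{axial} by $(1-x^2)^{\frac{N-2}{2}}\,\frac{d}{dx}$-type testing. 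Since $\int(1-x^2)^{\frac N2}u'F_k'\,dx=\lambda_k\int(1-x^2)^{\frac{N-2}{2}}uF_k$, while $P_{r,N}$ acts diagonally on $F_k$ with eigenvalue $\Gamma(k+N)/\Gamma(k)$, the Paneitz term for $k\ge2$ becomes
\begin{equation*}
\alpha\frac{\Gamma(k+N)}{\Gamma(k)}\int uF_k\,dw=\alpha\frac{\Gamma(k+N)}{\Gamma(k)\lambda_k}\int GF_k'\,dw\cdot(\text{norm}),
\end{equation*}
and the right side equals $(N-1)!\int gF_k\,dx$. To obtain the stated form with $\int(1-x^2)gF_k'$ and $\int F_kG$, I would instead differentiate the equation: multiply by $(1-x^2)F_k'$ and integrate by parts on the exponential term. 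This gives $\int g\,(1-x^2)F_k'\,Nu'$ on one side. Matching normalizations with \eqref{Fknorm} is then routine bookkeeping. For \eqref{a}, I apply the same test with $k=1$: $\phi=(1-x^2)$ gives $\int(1-x^2)g\,dx$ on the right, while the left involves $\int xG$, i.e.\ $\beta$ through Step 1. Solving yields $a=\frac N{N+1}(1-\alpha\beta)$.

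\textbf{Step 3: \eqref{D(N-2/2)G}.} Multiply \eqref{axial} by $xG=x(1-x^2)u'$ and integrate against $dw$. The left side equals $\alpha\frac{N-1}{2}\int G\mathcal Q_NG\,dw$ by Lemma \ref{lemXGPNu}. On the right, $\int xG\,dw=\beta/(N+1)$. Also
\begin{equation*}
\int g\,xG\,dx=\tfrac1N\int x(1-x^2)\bigl(e^{Nu}\bigr)'\cdot c\,(1-x^2)^{\frac{N-2}{2}}dx,
\end{equation*}
which after integration by parts becomes a combination of $\int g\,dx=1$ and $\int x^2g\,dx=1-a$. Substituting \eqref{a} then gives the stated value.

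\textbf{Main obstacle.} The hard part is the exact constant tracking in \eqref{bbk} and \eqref{D(N-2/2)G}: the weight derivative terms $(1-x^2)^{\frac{N-2}{2}}$ must combine correctly in the integration by parts of the $g$-term.
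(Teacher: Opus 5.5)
Your Steps 1 and 3 are correct and follow the paper. Testing \eqref{axial} against $x$ plus one integration by parts gives $a_0=0$. Lemma \ref{lemXGPNu} together with $\int_{-1}^1 xGg\,dx=\frac{N+1}{N}(1-a)-\frac1N=\alpha\beta$ gives \eqref{D(N-2/2)G}. (The opening assertion of Step 1, that $\int_{-1}^1(1-x^2)^{\frac{N-2}{2}}F_k'\,dx=0$, is false: $F_k'$ is orthogonal to constants only for the weight $(1-x^2)^{\frac N2}$, and the integral is positive for $k=1$ and $k=3$. You never use it, though.)

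The gap is in Step 2, which is the substance of the lemma. Your test function $\phi=(1-x^2)F_k'$ is the right one; by \eqref{recurence2} it amounts to the paper's testing against $F_{k\pm1}$ and recombining. For $k\ge2$ the right-hand side is immediately $(N-1)!\int_{-1}^1(1-x^2)gF_k'\,dx$, because $\int_{\mathbb{S}^N}(1-x^2)F_k'\,dw=0$. No integration by parts of the exponential term is needed, and doing it, as in your last suggestion, only produces the nonlinear quantity $\int_{-1}^1 gGF_k\,dx$.

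What is missing is the identity that turns the Paneitz side into the $F_k$-coefficient of $G$:
\begin{equation*}
\int_{\mathbb{S}^N}(P_{r,N}u)\,(1-x^2)F_k'\,dw=-\frac{\Gamma(k+N)}{\Gamma(k)}\int_{\mathbb{S}^N}GF_k\,dw.
\end{equation*}
Your statement that this side is explicit in ``the coefficients $a_{k\pm1}$'' is wrong. By \eqref{recurence2} and \eqref{Paneitzeigen} it is a combination of the $F_{k\pm1}$-coefficients of $u$, not of $G$. To identify that combination with $\int GF_k\,dw$ you need the integration by parts $\int_{\mathbb{S}^N} GF_k\,dw=\int_{\mathbb{S}^N} u\,(NxF_k-(1-x^2)F_k')\,dw$ and, from \eqref{recurence1}--\eqref{recurence2},
\begin{equation*}
NxF_k-(1-x^2)F_k'=\frac{-k(k-1)F_{k-1}+(k+N-1)(k+N)F_{k+1}}{2k+N-1}.
\end{equation*}
This is exactly the paper's argument via \eqref{uFk} at $j=k\pm1$; equivalently, it is self-adjointness of $P_{r,N}$ plus Lemma \ref{PNwf}. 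The constant in \eqref{bbk} then comes from $\sqrt{\pi}\,\Gamma(\frac N2)(N-1)!=2^{N-1}\Gamma(\frac N2)^2\Gamma(\frac{N+1}{2})$.

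The same identity at $k=1$ is what your claim that testing with $1-x^2$ ``involves $\int xG$'' silently relies on for \eqref{a}; there the term $\int_{\mathbb{S}^N}(1-x^2)\,dw=\frac{N}{N+1}$ also enters. The paper obtains this from $\mathrm{div}(x\nabla x)=-NF_2$, i.e.\ $\int_{\mathbb{S}^N} xG\,dw=N\int_{\mathbb{S}^N} uF_2\,dw$, and tests against $F_2$.

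Finally, your ``cleaner route'' gives the true but different relation $\alpha\frac{\Gamma(k+N)}{\Gamma(k)\lambda_k}\int_{\mathbb{S}^N}GF_k'\,dw=(N-1)!\int_{-1}^1 gF_k\,dx$. It pairs $G$ with $F_k'$ instead of $F_k$ and does not yield \eqref{bbk}.
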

\begin{proof}
Test \eqref{axial} against $x$ and use $P_{r,N}x=N!x$, we have
\begin{equation*}
\int_{\mathbb{S^N}}xudw=0.
\end{equation*}

Since $G=\nabla x\cdot \nabla u$ and $\Delta_{\mathbb{S}^N}x=-Nx$, integration by parts yields
\begin{equation*}
\int_{\mathbb{S^N}}Gdw=N\int_{\mathbb{S^N}}xudw=0.
\end{equation*}
Thus $a_0=0$.

Orthogonality and \eqref{Fknorm} give \eqref{F1 beta}.

Since
\begin{equation*}
\text{div}(x\nabla x)=1-(N+1)x^2=-NF_2,
\end{equation*}
we have
\begin{equation}\label{xGF2}
\int_{\mathbb{S^N}}xGdw=N\int_{\mathbb{S^N}}uF_2dw.
\end{equation}

On the other hand, we test \eqref{axial} against $F_2$ and use \eqref{xGF2} and \eqref{F1 beta} to get
\begin{equation*}
\int_{-1}^{1}gF_2dx=\alpha\beta,
\end{equation*}
which is equivalent to \eqref{a}.

Test \eqref{axial} against $F_j$ in the weighted space $L^2((-1,1);(1-x^2)^{\frac{N}{2}-1}dx)$. By the self-adjointness of $P_{r,N}$, we get
\begin{equation*}
\begin{aligned}
\alpha\int_{-1}^{1}(1-x^2)^{\frac{N}{2}-1} u \cdot P_{r,N}F_jdx=
\alpha\int_{-1}^{1}(1-x^2)^{\frac{N}{2}-1} P_{r,N}u \cdot F_jdx=\frac{(N-1)!\sqrt{\pi}\Gamma(\frac{N}{2})}{\Gamma(\frac{N+1}{2})}\int_{-1}^{1}gF_jdx.    
\end{aligned}
\end{equation*}

By \eqref{Paneitzeigen}, we have
\begin{equation*}
\alpha \frac{\Gamma(j+N)}{\Gamma(j)}\int_{-1}^{1}(1-x^2)^{\frac{N}{2}-1} uF_jdx=\frac{(N-1)!\sqrt{\pi}\Gamma(\frac{N}{2})}{\Gamma(\frac{N+1}{2})}\int_{-1}^{1}gF_jdx.
\end{equation*}
Hence, for $j\geq 1$, we have
\begin{equation}\label{uFk}
\int_{-1}^{1}(1-x^2)^{\frac{N}{2}-1} uF_jdx=\frac{2^{N-1}\Gamma(\frac{N}{2})^2\Gamma(j)}{\alpha\Gamma(j+N)}\int_{-1}^{1}gF_jdx.
\end{equation}

Note that by integration by parts, we have
\begin{equation*}
\begin{aligned}
\int_{-1}^{1}(1-x^2)^{\frac{N}{2}-1} GF_kdx=\int_{-1}^{1}(1-x^2)^{\frac{N}{2}} u'F_kdx=\int_{-1}^{1}(1-x^2)^{\frac{N}{2}-1} u(NxF_k-(1-x^2)F_k')dx.    
\end{aligned}
\end{equation*}

Using the recurrence formulas \eqref{recurence1}, \eqref{recurence2} and applying \eqref{uFk} with $j=k-1$ and $j=k+1$, we have
\begin{equation*}
\begin{aligned}
\int_{-1}^{1}(1-x^2)^{\frac{N}{2}-1} GF_kdx=-\frac{2^{N-1}\Gamma(\frac{N}{2})^2\Gamma(k)}{\alpha\Gamma(k+N)}\frac{k(k+N-1)}{2k+N-1}\int_{-1}^{1}g(F_{k-1}-F_{k+1})dx.
\end{aligned}
\end{equation*}
Then \eqref{bbk} follows by the recurrence formula \eqref{recurence2}.

Finally, we prove \eqref{D(N-2/2)G}. Integration of $\text{div}(\frac{e^{Nu}}{\gamma}\nabla x)$ gives
\begin{equation}\label{Gmu}
\int_{\mathbb{S}^N}\frac{e^{Nu}}{\gamma}Gdw=\int_{\mathbb{S}^N}x\frac{e^{Nu}}{\gamma}dw=0.
\end{equation}

Similarly, integration of $\text{div}(x\frac{e^{Nu}}{\gamma}\nabla x)$ gives
\begin{equation}\label{xGmu}
\int_{\mathbb{S}^N}\frac{e^{Nu}}{\gamma}xGdw=\frac{N+1}{N}\int_{\mathbb{S}^N}x^2\frac{e^{Nu}}{\gamma}dw-\frac 1N=\alpha\beta.
\end{equation}

By \eqref{xGPNu}, \eqref{axial}, \eqref{F1 beta}, \eqref{Gmu} and \eqref{xGmu}, we have
\begin{equation}\label{xGQ_NG}
\frac{N-1}{2}\int_{\mathbb{S}^N}GQ_NGdw=\int_{\mathbb{S}^N}xGP_{r,N}udw=\frac{(N-1)!}{\alpha}\left(\alpha-\frac{1}{N+1}\right)\beta
\end{equation}
as desired.
\end{proof}

\section{A weighted $\ell^2$ estimate of Gegenbauer coefficients}\label{weightedl^2est}
In this section, we will establish a weighted $\ell^2$ estimate for the Gegenbauer coefficients 
\begin{equation*}
b_{k}:=a_{k} \sqrt{ \int_{-1}^{1}(1-x^2)^{\frac{N-2}{2}}F_{k}^{2}dx},    
\end{equation*}
where $a_k$ is the $k$-th coefficient in the expansion of $G$, see \eqref{Gexpand}. 

The estimate for $b_k$ is one of the most important ingredients of this paper. In \cite{GHX2021}, Gui-Hu-Xie used \eqref{bbk} and the fact that
\begin{equation*}
|F_k'(x)|\leq |F_k'(1)|=\frac{\lambda_k}{N}
\end{equation*}
to estimate $b_k$ for $k\geq 2$ as follows
\begin{equation*}
\begin{aligned}
    b_{k}^{2}&=\frac{1}{\int_{-1}^{1}(1-x^2)^{\frac{N-2}{2}}F_{k}^{2}}\left[\frac{2^{N-1}\Gamma(\frac{N}{2})^2 \Gamma(k)}{\alpha \Gamma(k+N)}\int_{-1}^{1}(1-x^2)gF_{k}'\right]^2\\
    &\leq \frac{(k+N-2)!(2k+N-1)}{2^{N-1}\Gamma(\frac{N}{2})^2 k!}\left[\frac{2^{N-1}\Gamma(\frac{N}{2})^2 \Gamma(k)}{\alpha \Gamma(k+N)}\frac{\lambda_k}{N}a\right]^2\\
    &=\frac{2^{N-1}\Gamma(\frac{N}{2})^2 (2k+N-1)\Gamma(k+1)}{\alpha^2N^2\Gamma(k+N-1)}a^2.
\end{aligned}    
\end{equation*}
However, their estimates are not strong enough to reach the sharp constant $\alpha=\frac{1}{2}$ for $N=4,6,8$.

The works of Li-Wei-Ye \cite{LWY2022} and Gui-Li-Wei-Ye \cite{GLWY2025} refined this estimate and closed the gap $\alpha=\frac{1}{2}$ for $N=4,6,8$. However, the refined estimates are still difficult to generalize to arbitrary $N$.

We now derive a weighted $\ell^2$ estimate for $b_k$. To this end, we define
\begin{equation*}
A_{k}:=\int_{-1}^{1}(1-x^2)\tilde{F}_{k}'g dx.
\end{equation*}
Recalling the definition of $g$, \eqref{AxialLrN} and \eqref{a}, we have
\begin{equation*}
\int_{-1}^{1}g dx=1,\ \int_{-1}^{1}xg dx=0\text{ and }\int_{-1}^{1}(1-x^2)g dx=a=\frac{N}{N+1}(1-\alpha\beta).
\end{equation*}

By \eqref{bbk}, we have
\begin{equation*}
    \frac{\Gamma(k+N-1)}{\Gamma(k+1)}b_k^2=\frac{2^{N-1}\Gamma(\frac{N}{2})^2}{\alpha^2 N^2}(2k+N-1)A_k^2
\end{equation*}
for $k\geq 2$.

With the above properties of $g$, the next theorem provides a weighted $\ell^2$ estimate on $A_k$ and hence on $b_k$, which yields a lower bound for $a$.
\begin{theorem}\label{bk}
Let $N\geq 3$ be an integer and $\frac{1}{2}\leq \alpha\leq 1$. Let $a$, $A_k$ be as above. Then we have
\begin{equation*}
    Na-(N+1)a^2\leq \sum_{k=2}^{\infty}(2k+N-1)A_k^2<\frac{N^2}{N-1}\frac{a}{2-a}-(N+1)a^2.
\end{equation*}
As a consequence, we have $a> \frac{N-2}{N-1}$. 
\end{theorem}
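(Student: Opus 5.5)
The plan is to write the whole series as a double integral against the kernel $K_N$. The bounds then come from the pointwise estimates on $(1-xy)K_N(x,y)$ together with elementary moment inequalities for the probability density $g$.

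\textbf{Step 1: series as a double integral.} Since $\tilde F_1'\equiv 1$, we have $A_1=a$, and the $k=1$ term of the full series is $(N+1)a^2$. Expanding $A_k^2$ as a double integral and using the absolute convergence bound \eqref{absoluteconv} to justify Fubini (the weight $(1-x^2)(1-y^2)$ dominates $M^{-1}$), we get
\begin{equation*}
\sum_{k=2}^{\infty}(2k+N-1)A_k^2=\int_{-1}^1\!\int_{-1}^1(1-x^2)(1-y^2)K_N(x,y)g(x)g(y)\,dx\,dy-(N+1)a^2 .
\end{equation*}
Both claimed inequalities are therefore statements about $\mathcal K:=\iint(1-x^2)(1-y^2)K_N\,g\,g$.

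\textbf{Step 2: upper bound.} By \eqref{KNxy}, we have $\mathcal K\le \frac{N^2}{N-1}\iint\frac{(1-x^2)(1-y^2)}{1-xy}gg$. The inequality is strict, because $g$ is a positive continuous density, so the off-diagonal set has full measure. Next use the identity $(1-x^2)(1-y^2)=(1-xy)^2-(x-y)^2$ together with $\int g=1$ and $\int xg=0$:
\begin{equation*}
\iint\frac{(1-x^2)(1-y^2)}{1-xy}gg=1-\iint\frac{(x-y)^2}{1-xy}gg .
\end{equation*}
Set $m_2=\int x^2g=1-a$. We bound the last integral from below by Cauchy--Schwarz:
\begin{equation*}
\iint\frac{(x-y)^2}{1-xy}gg\ \ge\ \frac{\left(\iint(x-y)^2gg\right)^2}{\iint(x-y)^2(1-xy)gg}.
\end{equation*}
Because $g$ has mean zero, we have $\iint(x-y)^2gg=2m_2$ and $\iint(x-y)^2(1-xy)gg=2m_2+2m_2^2$. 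The quotient is therefore $\frac{2m_2}{1+m_2}=\frac{2(1-a)}{2-a}$, and hence $\iint\frac{(1-x^2)(1-y^2)}{1-xy}gg\le\frac{a}{2-a}$. Combined with Step 1, this gives the strict upper bound.

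\textbf{Step 3: lower bound.} This is where I expect the main obstacle. The even-symmetrized formula in the proof of \eqref{KNxy}, with $1/(1-r^2t^2)\ge 1$, gives $(1-xy)K_N\ge N$. So it would suffice to show $\iint\frac{(1-x^2)(1-y^2)}{1-xy}gg\ge a$. By the identity in Step 2, this is equivalent to $\iint\frac{(x-y)^2}{1-xy}gg\le m_2$. The naive route, expanding $\frac1{1-xy}=\sum(xy)^n$, gives only $a^2$, which is too weak. I would first try to find a pointwise majorant of $\frac{(x-y)^2}{1-xy}$ of the form $\phi(x)+\phi(y)+c\,xy$; the mean-zero condition kills the $xy$ term, so the double integral of such a majorant is $2\int\phi g$, and the goal is to get this down to $m_2$. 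The simple choice $\phi(x)=x^2/2$ fails near $x=-y\to0$, so this has to be done more cleverly. The fallback is positive-definiteness of $K_N$: Cauchy--Schwarz in the form $\mathcal K\ge B(g,h)^2/\mathcal K(h)$ with a test density $h$, computing $B(g,h)$ through the pointwise identity $K_N(x,1)=\frac{N}{1-x}$ from \eqref{1-xexpand}, which yields $\int(1-x^2)g\,\frac{N}{1-x}=N$.

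\textbf{Step 4: consequence.} Comparing the two bounds gives $Na<\frac{N^2}{N-1}\frac{a}{2-a}$. Dividing by $Na>0$ yields $(N-1)(2-a)<N$, that is, $a>\frac{N-2}{N-1}$.
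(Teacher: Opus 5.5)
Your Steps 1, 2 and 4 are correct. Your upper bound takes a different route from the paper's. The paper introduces $B_k=\int_{-1}^1x(1-x^2)\tilde F_k'g\,dx$ and bounds $T=\sum_k(2k+N-1)B_k^2\le(1-a)\bigl(\frac{N^2}{N-1}a-S\bigr)$ via Cauchy--Schwarz and the diagonal value \eqref{KNxx}. It then bounds $S-T\le\frac{N^2}{N-1}a^2$ by \eqref{KNxy} and eliminates $T$. You instead apply \eqref{KNxy} once and reduce to the moment inequality $\iint\frac{(x-y)^2}{1-xy}g(x)g(y)\,dx\,dy\ge\frac{2m_2}{1+m_2}$. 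This avoids \eqref{KNxx} and the auxiliary series $T$, and reaches the same bound $\frac{N^2}{N-1}\frac{a}{2-a}$.

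Step 3, however, has a genuine gap, and neither of your proposed routes can close it. The lower bound is false if all you use about $g$ is $g>0$, $\int g=1$, $\int xg=0$, and your own argument shows this. Steps 1, 2 and 4 use nothing else about $g$. So if Step 3 held for every such density, every positive mean-zero probability density on $(-1,1)$ would satisfy $\int_{-1}^1(1-x^2)g\,dx>\frac{N-2}{N-1}$. That fails for an even density concentrated near $\pm1$. Your reduced inequality fails even earlier: for $g$ concentrated near $\pm s$ with $0<s<1$, the left side tends to $\frac{2s^2}{1+s^2}>s^2=m_2$. Note also that the hypothesis $\alpha\ge\frac12$ never enters your proof, and it is exactly what the lower bound needs.

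The missing ingredient is the Euler--Lagrange equation. By \eqref{bbk}, for $k\ge2$ you have $\frac{\Gamma(k+N-1)}{\Gamma(k+1)}b_k^2=\frac{2^{N-1}\Gamma(\frac N2)^2}{\alpha^2N^2}(2k+N-1)A_k^2$, while $\Gamma(N)b_1^2=\frac{2^{N-1}\Gamma(\frac N2)^2}{N+1}\beta^2$. The identity \eqref{D(N-2/2)G}, which rests on Lemma \ref{lemXGPNu} and the equation, evaluates $\sum_{k\ge1}\frac{\Gamma(k+N-1)}{\Gamma(k+1)}b_k^2$ in terms of $\beta$. Combining these with \eqref{a} gives the exact identity
\begin{equation*}
\sum_{k=2}^{\infty}(2k+N-1)A_k^2=Na-(N+1)a^2+\frac{2N^2}{N-1}\Bigl(\alpha-\frac12\Bigr)\alpha\beta .
\end{equation*}
The lower bound then follows from $\alpha\ge\frac12$ and $\beta\ge0$. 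The sign of $\beta$ comes from \eqref{D(N-2/2)G}, since $\mathcal Q_N$ is positive and $N+1-\frac1\alpha>0$. With this in place of Step 3, your Step 4 goes through unchanged.
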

\begin{proof}
\textbf{Step 1: The lower bound} 

Recall that
\begin{equation*}
    b_k^2=\frac{2^{N-1}\Gamma(\frac{N}{2})^2 \Gamma(k+1)}{(k+N-2)!(2k+N-1)}a_k^2.
\end{equation*}

By \eqref{F1 beta}, \eqref{bbk} and \eqref{D(N-2/2)G}, we have
\begin{equation*}
    \Gamma(N)b_1^2=\frac{2^{N-1}\Gamma(\frac{N}{2})^2}{N+1}\beta^2,
\end{equation*}
\begin{equation*}
    \frac{\Gamma(k+N-1)}{\Gamma(k+1)}b_k^2=\frac{2^{N-1}\Gamma(\frac{N}{2})^2}{\alpha^2 N^2}(2k+N-1)A_k^2
\end{equation*}
for $k\geq 2$ and
\begin{equation*}
    \sum_{k=1}^{\infty}\frac{\Gamma(k+N-1)}{\Gamma(k+1)}b_k^2=\frac{2^{N}\Gamma(\frac{N}{2})^2}{(N+1)(N-1)}(N+1-\frac{1}{\alpha})\beta.
\end{equation*}

Then we have
\begin{equation*}
    \begin{aligned}
        \sum_{k=2}^{\infty}(2k+N-1)A_k^2
        &=\frac{\alpha^2 N^2}{2^{N-1}\Gamma(\frac{N}{2})^2}\sum_{k=2}^{\infty}\frac{\Gamma(k+N-1)}{\Gamma(k+1)}b_k^2\\
        &=\frac{2\alpha^2 N^2}{(N+1)(N-1)}(N+1-\frac{1}{\alpha})\beta-\frac{\alpha^2 N^2}{N+1}\beta^2\\
        &=\frac{2N^2}{(N+1)(N-1)}(\alpha (N+1)-1)\alpha\beta-\frac{N^2}{N+1}\alpha^2\beta^2.
    \end{aligned}
\end{equation*}

Since $a=\frac{N}{N+1}(1-\alpha\beta)$, $\alpha\geq \frac{1}{2}$ and $\beta\geq 0$ by \eqref{D(N-2/2)G}, we have
\begin{equation*}
    \sum_{k=2}^{\infty}(2k+N-1)A_k^2=Na-(N+1)a^2+\frac{2N^2}{N-1}(\alpha-\frac{1}{2})\alpha\beta\geq Na-(N+1)a^2.
\end{equation*}

\textbf{Step 2: The upper bound} 

For the upper bound, set
\begin{equation*}
    B_k=\int_{-1}^{1}x(1-x^2)\tilde F_k'gdx,
\end{equation*}
\begin{equation*}
    S=\sum_{k=1}^{\infty}(2k+N-1)A_k^2,
\end{equation*}
and
\begin{equation*}
    T=\sum_{k=1}^{\infty}(2k+N-1)B_k^2.
\end{equation*}

Since $\int_{-1}^{1}xg dx=0$, we have
\begin{equation*}
    B_k=\int_{-1}^{1}x\left((1-x^2)\tilde F_k'-A_k\right)gdx.
\end{equation*}
Then the Cauchy--Schwarz inequality implies that
\begin{equation*}
    \begin{aligned}
        B_k^2
        \leq& \left(\int_{-1}^{1}x^2 gdx\right)\left(\int_{-1}^{1}\left((1-x^2)\tilde F_k'-A_k\right)^2 gdx\right)\\
        =&(1-a)\int_{-1}^{1}\left((1-x^2)^2(\tilde F_k')^2-A_k^2\right) gdx.
    \end{aligned}
\end{equation*}

By \eqref{KNxx}, we have
\begin{equation*}
    \sum_{k=1}^{\infty}(2k+N-1)\int_{-1}^{1}(1-x^2)^2(\tilde F_k')^2 gdx=\int_{-1}^{1}(1-x^2)^2 K_N(x,x) gdx=\frac{N^2}{N-1}\int_{-1}^{1}(1-x^2) gdx=\frac{N^2}{N-1}a.
\end{equation*}
Hence, we have
\begin{equation*}
    T\leq (1-a)\left(\frac{N^2}{N-1}a-S\right). 
\end{equation*}

On the other hand, by  \eqref{KNxy} and \eqref{absoluteconv}, we can apply the dominated convergence theorem to interchange the order of the kernel series and double integrals to get
\begin{equation*}
    S-T=\int_{-1}^{1}\int_{-1}^{1}(1-x^2)(1-y^2)(1-xy)K_N(x,y)g(x)g(y)dxdy\leq \frac{N^2}{N-1}a^2.
\end{equation*}

Moreover, since $g>0$ on $(-1,1)$ and \eqref{KNxy} is strict when $x\neq y$, the upper bound here is strict.

Eliminating $T$ from the two preceding inequalities, we find that
\begin{equation*}
    S<\frac{N^2}{N-1}\frac{a}{2-a}.
\end{equation*}

Subtracting the $k=1$ term $(N+1)A_1^2=(N+1)a^2$ yields the desired upper bound.

Comparing the lower bound and upper bound yields that $a>\frac{N-2}{N-1}$. This completes the proof.
\end{proof}

\section{Instability of non-constant solutions}\label{Sec: instability}
In this section, we will prove Theorem \ref{instability}. That is, non-constant critical points of $\mathcal{I}_{\alpha,N}$ are unstable. Since for $u\in \mathcal{L}_{r,N}$,
\begin{equation*}
\mathcal{I}_{\alpha,N}(u)=\frac{\sqrt{\pi}\Gamma(\frac{N}{2})}{\Gamma(\frac{N+1}{2})}J_{\alpha,N}(u),
\end{equation*}
we will study the stability of $J_{\alpha,N}$ under axially symmetric perturbations for simplicity.

Before we state the proof, we need to study the second variation of $J_{\alpha,N}$.

Write
\begin{equation*}
d\mu=\frac{e^{Nu}}{\gamma}dw.
\end{equation*}
Then $\mu$ is a probability measure on $\mathbb{S}^N$ and
\begin{equation*}
\int_{\mathbb{S}^N}xd\mu=0.
\end{equation*}

Differentiating $J_{\alpha,N}$ twice, we have for any smooth axially symmetric functions $h_1$ and $h_2$, the second variation of $J_{\alpha,N}$ in axially symmetric class is given by
\begin{equation}\label{secondvariationJ}
\begin{aligned}
\delta^2 J_{\alpha,N}(u)[h_1,h_2]
=&\alpha \int_{\mathbb{S}^N}h_1P_{r,N}h_2dw-N!\left(\int_{\mathbb{S}^N}h_1h_2d\mu-\int_{\mathbb{S}^N}h_1d\mu\int_{\mathbb{S}^N}h_2d\mu\right)  .
\end{aligned}
\end{equation}

Let
\begin{equation*}
C(u):=\int_{\mathbb{S}^N}xe^{Nu}dw
\end{equation*}
be the constraint. 

Suppose $h$ is a smooth axially symmetric function on $\mathbb{S}^N$ with
\begin{equation}\label{htang}
\int_{\mathbb{S}^N}xhd\mu=0.
\end{equation}

Consider $\Psi(s,r):=C(u+sh+rx)$. Since
\begin{equation*}
\Psi_r(0,0)=N\gamma\int_{\mathbb{S}^N}x^2 d\mu>0,
\end{equation*}
the implicit function theorem gives a $C^2$ function $r=r(s)$, defined for small $s$, such that $r(0)=0$,
\begin{equation*}
u_s=u+sh+r(s)x
\end{equation*}
and $C(u_s)=0$.

\begin{lemma}\label{delta2J}
For $u_s$ defined above, we have
\begin{equation*}
\left.\frac{d^2}{ds^2}\right|_{s=0}J_{\alpha,N}(u_s)=\delta^2J_{\alpha,N}(u)[h,h].
\end{equation*}
\end{lemma}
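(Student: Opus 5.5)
We expand $s\mapsto J_{\alpha,N}(u_s)$ by the chain rule:
\begin{equation*}
\frac{d^2}{ds^2}\Big|_{s=0}J_{\alpha,N}(u_s)=\delta^2J_{\alpha,N}(u)[h+r'(0)x,\,h+r'(0)x]+\delta J_{\alpha,N}(u)[r''(0)x].
\end{equation*}
Here $\delta J_{\alpha,N}(u)$ is the first variation. It suffices to show that $r'(0)=0$, and that the first-variation term vanishes because $u$ is a critical point and $x$ is annihilated by $\delta J_{\alpha,N}(u)$.

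\textbf{Step 1: $r'(0)=0$.} We differentiate $C(u_s)=0$ at $s=0$:
\begin{equation*}
0=N\gamma\int_{\mathbb{S}^N}x\,(h+r'(0)x)\,d\mu .
\end{equation*}
By the tangency condition \eqref{htang}, this gives $r'(0)\int_{\mathbb{S}^N}x^2d\mu=0$. Since $\int_{\mathbb{S}^N} x^2 d\mu>0$, we get $r'(0)=0$. Hence the second-variation term reduces to $\delta^2J_{\alpha,N}(u)[h,h]$.

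\textbf{Step 2: the first-variation term vanishes.} The first variation is
\begin{equation*}
\delta J_{\alpha,N}(u)[\phi]=\alpha\int_{\mathbb{S}^N}\phi\,P_{r,N}u\,dw+(N-1)!\int_{\mathbb{S}^N}\phi\,dw-(N-1)!\int_{\mathbb{S}^N}\phi\,d\mu .
\end{equation*}
Take $\phi=x$. By self-adjointness and $P_{r,N}x=N!\,x$, the first term is $\alpha N!\int_{\mathbb{S}^N} xu\,dw$. This is zero, since $\int_{\mathbb{S}^N} xu\,dw=0$ was shown by testing \eqref{axial} against $x$. Alternatively, by \eqref{axial} (with the Lagrange multipliers vanishing), $\delta J_{\alpha,N}(u)$ is identically zero on axially symmetric functions. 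Either way, the second term vanishes because $\int_{\mathbb{S}^N} x\,dw=0$, and the third vanishes because $\int_{\mathbb{S}^N} x\,d\mu=0$, which is the constraint. Therefore $\delta J_{\alpha,N}(u)[r''(0)x]=0$.

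\textbf{Justification.} The only delicate point is that $s\mapsto J_{\alpha,N}(u+sh+r(s)x)$ is $C^2$, so that the chain rule applies. This holds because $J_{\alpha,N}$ is smooth along the finite-dimensional family $(s,r)\mapsto u+sh+rx$ when $u$ is smooth: the log-integral term is a smooth function of $(s,r)$ by differentiation under the integral sign. The function $r(s)$ is $C^2$ by the implicit function theorem. Combining Steps 1 and 2 gives the claim.
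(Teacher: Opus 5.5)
Your proposal is correct and follows essentially the same route as the paper. You differentiate the constraint and use \eqref{htang} to get $r'(0)=0$, then kill the term $r''(0)\,\delta J_{\alpha,N}(u)[x]$ because $u$ solves \eqref{axial} with no Lagrange multiplier. Your explicit check that $\delta J_{\alpha,N}(u)[x]=\alpha N!\int_{\mathbb{S}^N}xu\,dw=0$ is just an unpacking of that same step, and the smoothness remark supplies the justification for the chain rule that the paper leaves implicit.
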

\begin{proof}
Differentiating $C(u_s)$ at $s=0$ and using \eqref{htang}, we have
\begin{equation*}
0=\delta C(u)[h]+r'(0)\delta C(u)[x]=N\int_{\mathbb{S}^N}e^{Nu}dw\int_{\mathbb{S}^N}xhd\mu+r'(0)\delta C(u)[x]=r'(0)\delta C(u)[x].
\end{equation*}

Thus $r'(0)=0$ and $\dot u_0=h$. Here $\dot u_0$ denotes the derivative of $u_s$ in $s$ at $s=0$.

A second differentiation gives
\begin{equation*}
0=\delta^2 C(u)[h,h]+r''(0)\delta C(u)[x].
\end{equation*}

Since $u$ satisfies \eqref{paneitz}, 
\begin{equation*}
\delta J_{\alpha,N}(u)[\varphi]=0    
\end{equation*}
for every axially symmetric variation $\varphi$. Hence,


\begin{equation*}
\begin{aligned}
\left.\frac{d^2}{ds^2}\right|_{s=0}J_{\alpha,N}(u_s)=\delta^2J_{\alpha,N}(u)[h,h]+r''(0)\delta J_{\alpha,N}(u)[x]=\delta^2J_{\alpha,N}(u)[h,h].
\end{aligned}
\end{equation*} 
\end{proof}

Then we give the proof of Theorem \ref{instability}.
\begin{proof}[Proof of Theorem \ref{instability}]

Applying Lemma \ref{PNwf} to \eqref{axial}, since $(1-x^2)(e^{Nu})'=NGe^{Nu}$, we see that $G$ satisfies
\begin{equation*}
\alpha P_{r,N}G=\alpha (1-x^2)(P_{r,N}u)'-NxP_{r,N}u=N!\left(\frac{e^{Nu}}{\gamma}G+x(1-\frac{e^{Nu}}{\gamma})\right).
\end{equation*}

Denote $t:=\alpha\beta$. Testing this equation against $G$ in $L^2(\mathbb{S}^N,dw)$ and using \eqref{F1 beta} and \eqref{xGmu}, we obtain
\begin{equation*}
\frac{\alpha}{N!}\int_{\mathbb{S}^N}GP_{r,N}Gdw=\int_{\mathbb{S}^N}G^2d\mu-t+\frac{t}{\alpha (N+1)}.
\end{equation*}

Let
\begin{equation*}
    \phi:=G-\frac{(N+1)t}{1+Nt}x.
\end{equation*}
Then we have
\begin{equation*}
\int_{\mathbb{S}^N}\phi d\mu=0,\qquad \int_{\mathbb{S}^N}x\phi d\mu=0.
\end{equation*}
Moreover, we have
\begin{equation}\label{phi2}
\int_{\mathbb{S}^N}\phi^2 d\mu=\int_{\mathbb{S}^N}G^2d\mu-\frac{(N+1)t^2}{1+Nt}.
\end{equation}

Since $P_{r,N}x=N!x$ and
\begin{equation*}
\int_{\mathbb{S}^N}x^2 dw=\frac{1}{N+1},\qquad \int_{\mathbb{S}^N}xG dw=\frac{t}{(N+1)\alpha},
\end{equation*}
we find
\begin{equation}\label{phiPrNphi}
\frac{\alpha}{N!}\int_{\mathbb{S}^N}\phi P_{r,N}\phi dw=\int_{\mathbb{S}^N}G^2d\mu-t+\frac{t}{\alpha (N+1)}-\frac{2t^2}{1+Nt}+\frac{\alpha (N+1)t^2}{(1+Nt)^2}.
\end{equation}

Subtracting \eqref{phi2} from \eqref{phiPrNphi}, by \eqref{secondvariationJ}, we have
\begin{equation*}
\frac{1}{N!}\delta^2J_{\alpha,N}(u)[\phi,\phi]=-t+\frac{t}{\alpha (N+1)}-\frac{2t^2}{1+Nt}+\frac{\alpha (N+1)t^2}{(1+Nt)^2}+\frac{(N+1)t^2}{1+Nt}.
\end{equation*}

If $u$ is non-constant, \eqref{D(N-2/2)G} gives $t>0$. Furthermore, by \eqref{a} and Theorem \ref{bk}, we have
\begin{equation*}
\frac{N-2}{N-1}<a=\frac{N}{N+1}(1-t).
\end{equation*}
Then Lemma \ref{tinstability} gives
\begin{equation*}
\delta^2J_{\alpha,N}(u)[\phi,\phi]<0.
\end{equation*}
This implies
\begin{equation*}
0<t<\frac{2}{N(N-1)}.
\end{equation*}

By Lemma \ref{delta2J}, it follows that $u$ is unstable. Consequently, we finish the proof of Theorem \ref{instability}.
\end{proof}

\section{Proof of Theorem \ref{main}}\label{proofmain}
In this section, we will complete the proof of Theorem \ref{main}.

\begin{proof}[Proof of Theorem \ref{main}]
First suppose $\frac12<\alpha<1$.  By Proposition \ref{minimizerexist} and Proposition \ref{minimizerreg}, the infimum is attained by a smooth axially symmetric function $u$.  A minimizer is stable along every curve in the constraint.  Theorem \ref{instability} therefore forces $u$ to be constant, and hence
\begin{equation*}
\inf_{u\in \mathcal{L}_{r,N}}J_{\alpha,N}(u)=0
\end{equation*}
for any $\frac{1}{2}<\alpha<1$. For any fixed $u\in\mathcal{L}_{r,N}$, we have
\begin{equation*}
J_{1/2,N}(u)=\lim_{\alpha\downarrow1/2}J_{\alpha,N}(u)\geq0,
\end{equation*}

If $\alpha\geq1$, Beckner's inequality and positivity of $P_N$ give
\begin{equation}\label{alpha>1}
J_{\alpha,N}(u)
=J_{1,N}(u)
+\frac{\alpha-1}{2}\int_{\mathbb{S}^N}uP_{r,N}u dw
\geq0.
\end{equation}
Constants show that the infimum is zero in every case.

If $\frac12\leq\alpha\leq1$ and
$J_{\alpha,N}(u)=0$, then $u\in \mathcal{L}_{r,N}$ is a global minimizer. Lemma \ref{Differentiable} and the regularity argument in Proposition \ref{minimizerreg} show that $u$ is smooth and stable. Theorem \ref{instability} forces it to be constant.  If $\alpha>1$, equality in \eqref{alpha>1} forces
\begin{equation*}
\int_{\mathbb{S}^N}uP_{r,N}udw=0.
\end{equation*}
Since $P_{r,N}$ is a positive operator, $u$ is constant.  

Conversely, the equality is attained by any constant function.
\end{proof}

\appendix

\section{Optimality of $\alpha=\frac{1}{2}$ for odd $N$}\label{optimal}

In this appendix, we will provide the proof of Proposition \ref{optimalodd}. Though our this appendix, every $O(1)$ term is uniform as $t\to 1^-$.

\begin{proof}[Proof of Proposition \ref{optimalodd}]
We will argue by contradiction. Suppose that $\alpha<\frac{1}{2}$. For $0<t<1$, define
\begin{equation*}
\varphi_t^{\pm}(x):=\log\frac{1-t^2}{1+t^2\mp 2tx}
\end{equation*}
and
\begin{equation*}
T_t(x)=\frac{(1+t^2)x-2t}{1+t^2-2tx}.
\end{equation*}

Then $T_t$ is a diffeomorphism  from $[-1,1]$ to $[-1,1]$ and
\begin{equation*}
(1-x^2)^{\frac{N}{2}-1}e^{N\varphi_t^+}=(1-T_t(x)^2)^{\frac{N}{2}-1}T_t'(x).
\end{equation*}

It follows that by the change of variable $y=T_t(x)$, we have
\begin{equation*}
\int_{-1}^{1}(1-x^2)^{\frac{N}{2}-1}e^{N\varphi_t^+}dx=\int_{-1}^{1}(1-y^2)^{\frac{N-2}{2}}dy=\frac{\sqrt{\pi}\Gamma(\frac{N}{2})}{\Gamma(\frac{N+1}{2})}.
\end{equation*}

Similarly, we have
\begin{equation*}
\int_{-1}^{1}(1-x^2)^{\frac{N}{2}-1}e^{N\varphi_t^-}dx=\frac{\sqrt{\pi}\Gamma(\frac{N}{2})}{\Gamma(\frac{N+1}{2})}.
\end{equation*}

\textbf{Claim 1: $\varphi_t^\pm$ satisfies \eqref{axial} with $\alpha=1$.} 
Indeed, we have
\begin{equation*}
\frac{d^k}{dx^k}\varphi_t^+(x)=(k-1)!2^kt^k(1+t^2-2tx)^{-k}
\end{equation*}
and
\begin{equation*}
\frac{d^k}{dx^k}e^{N\varphi_t^+(x)}=(N)_k2^kt^k(1-t^2)^{N}(1+t^2-2tx)^{-N-k},
\end{equation*}
where 
\begin{equation*}
(s)_k:=
\begin{cases}
    s(s+1)\cdots (s+k-1),&\qquad k\geq 1,\\
    1,&\qquad k=0
\end{cases}    
\end{equation*}
is the Pochhammer symbol for $s\in \mathbb{R}$ and $k\in \mathbb{N}$.

Using \eqref{Rodrigues} and integrating by parts for $k$ times, we get for $k\geq 1$,
\begin{equation*}
\int_{-1}^{1}(1-x^2)^{\frac{N}{2}-1}\varphi_t^+F_k dx=\kappa_{k,N}(k-1)!2^kt^k\int_{-1}^{+1}(1-x^2)^{\frac{N}{2}+k-1}(1+t^2-2tx)^{-k}dx
\end{equation*}
and
\begin{equation*}
\int_{-1}^{1}(1-x^2)^{\frac{N}{2}-1}e^{N\varphi_t^+}F_k dx=\kappa_{k,N}(N)_k2^kt^k(1-t^2)^{N}\int_{-1}^{+1}(1-x^2)^{\frac{N}{2}+k-1}(1+t^2-2tx)^{-N-k}dx
\end{equation*}
for some constant $\kappa_{k,N}>0$.

With the change of variable $y=T_t(x)$, we have
\begin{equation*}
\begin{aligned}
(1-t^2)^{N}\int_{-1}^{+1}(1-x^2)^{\frac{N}{2}+k-1}(1+t^2-2tx)^{-N-k}dx
=&\int_{-1}^{+1}(1-y^2)^{\frac{N}{2}+k-1}(1+t^2+2ty)^{-k}dy\\
=&\int_{-1}^{+1}(1-y^2)^{\frac{N}{2}+k-1}(1+t^2-2ty)^{-k}dy.
\end{aligned}
\end{equation*}

Then we have
\begin{equation*}
\frac{\Gamma(k+N)}{\Gamma(k)}\int_{-1}^{1}(1-x^2)^{\frac{N}{2}-1}\varphi_t^+F_k dx=(N-1)!\int_{-1}^{1}(1-x^2)^{\frac{N}{2}-1}e^{N\varphi_t^+}F_k dx.
\end{equation*}

By \eqref{Paneitzeigen}, it follows that
\begin{equation*}
\int_{-1}^{1}(1-x^2)^{\frac{N}{2}-1}P_{r,N}\varphi_t^+F_k dx=\int_{-1}^{1}(1-x^2)^{\frac{N}{2}-1}\varphi_t^+P_{r,N}F_k dx=(N-1)!\int_{-1}^{1}(1-x^2)^{\frac{N}{2}-1}e^{N\varphi_t^+}F_k dx.
\end{equation*}

Since
\begin{equation*}
\int_{-1}^{1}(1-x^2)^{\frac{N}{2}-1}F_k=0,
\end{equation*}
we have
\begin{equation*}
\int_{-1}^{1}(1-x^2)^{\frac{N}{2}-1}P_{r,N}\varphi_t^+F_k dx=(N-1)!\int_{-1}^{1}(1-x^2)^{\frac{N}{2}-1}(e^{N\varphi_t^+}-1)F_k dx
\end{equation*}
for $k\geq 1$. For $k=0$, both sides are zero and the equality holds automatically. The same equality also holds for $\varphi_t^-$. Hence the claim follows.

With the change of variable $y=T_t(x)$, we can compute
\begin{equation*}
\int_{-1}^{1}(1-x^2)^{\frac{N}{2}-1}\varphi_t^{\pm}dx=\frac{\sqrt{\pi}\Gamma(\frac{N}{2})}{\Gamma(\frac{N+1}{2})}\log(1-t^2)+O(1)
\end{equation*}
and
\begin{equation*}
\int_{-1}^{1}(1-x^2)^{\frac{N}{2}-1}\varphi_t^{\pm}e^{N\varphi_t^\pm}dx=-\frac{\sqrt{\pi}\Gamma(\frac{N}{2})}{\Gamma(\frac{N+1}{2})}\log(1-t^2)+O(1).
\end{equation*}

\textbf{Claim 2: }
\begin{equation}
\int_{-1}^{1}(1-x^2)^{\frac{N}{2}-1}\varphi_t^{\pm}P_{r,N}\varphi_t^{\pm}dx=-2(N-1)!\frac{\sqrt{\pi}\Gamma(\frac{N}{2})}{\Gamma(\frac{N+1}{2})}\log(1-t^2)+O(1)
\end{equation}
Indeed, write $\varphi_t^+=\log(1-t^2)+v_t^+$ with $v_t=-\log(1+t^2-2tx)$. Since $\varphi_t^+$ satisfies \eqref{axial} with $\alpha=1$, testing it against $\varphi_t^+$, we obtain that
\begin{equation*}
\begin{aligned}
\int_{-1}^{1}(1-x^2)^{\frac{N}{2}-1}\varphi_t^{+}P_{r,N}\varphi_t^{+}dx
=&(N-1)!\int_{-1}^{1}(1-x^2)^{\frac{N}{2}-1}\varphi_t^{+}(e^{N\varphi_t^+}-1)dx\\
=&(N-1)!\int_{-1}^{1}(1-x^2)^{\frac{N}{2}-1}v_t^{+}(e^{N\varphi_t^+}-1)dx
\end{aligned}
\end{equation*}

Since $|v_t^+(x)|\leq C(1+|\log(1-x)|)$, we have
\begin{equation}\label{vt}
\int_{-1}^{1}(1-x^2)^{\frac{N}{2}-1}v_t^{+}dx=O(1).    
\end{equation}

Let $x=\frac{(1+t)^2-(1-t)^2r}{(1+t)^2+(1-t)^2r}$, we have
\begin{equation}\label{vtvarphit}
\begin{aligned}
\int_{-1}^{1}(1-x^2)^{\frac{N}{2}-1}v_t^{+}e^{N\varphi_t^+}dx
=&2^{N-1}\int_0^{\infty}\frac{r^{\frac{N}{2}-1}}{(1+r)^N}\left[-2\log(1-t)-\log(1+r)+\log(1+\frac{(1-t)^2}{(1+t)^2}r)\right]dr\\
=&-\frac{2\sqrt{\pi}\Gamma(\frac{N}{2})}{\Gamma(\frac{N+1}{2})}\log(1-t^2)+O(1).
\end{aligned}
\end{equation}

Combining \eqref{vt} and \eqref{vtvarphit}, Claim 2 follows for $\varphi_t^+$. The proof for $\varphi_t^-$ is similar.

\textbf{Claim 3: }
\begin{equation*}
\int_{-1}^{1}(1-x^2)^{\frac{N}{2}-1}\varphi_t^{+}P_{r,N}\varphi_t^{-}dx=O(1).
\end{equation*}
Indeed, testing \eqref{axial} with $\alpha=1$ for $\varphi_t^-$ against $\varphi_t^+$, we have
\begin{equation*}
\begin{aligned}
\int_{-1}^{1}(1-x^2)^{\frac{N}{2}-1}\varphi_t^{+}P_{r,N}\varphi_t^{-}dx
=&(N-1)!\int_{-1}^{1}(1-x^2)^{\frac{N}{2}-1}\varphi_t^{+}(e^{N\varphi_t^-}-1)dx\\
=&(N-1)!\int_{-1}^{1}(1-x^2)^{\frac{N}{2}-1}v_t^{+}(e^{N\varphi_t^-}-1)dx.
\end{aligned}
\end{equation*}

For $x\leq 0$, since $v_t^+$ is uniformly bounded and
\begin{equation}\label{varphit}
\int_{-1}^{1} (1-x^2)^{\frac{N}{2}-1}e^{N\varphi_t^-}dx=\frac{\sqrt{\pi}\Gamma(\frac{N}{2})}{\Gamma(\frac{N+1}{2})},  
\end{equation}
we have
\begin{equation*}
\int_{-1}^{0} (1-x^2)^{\frac{N}{2}-1}|v_t^{+}|e^{N\varphi_t^-}dx\leq C.  
\end{equation*}

For $x\geq 0$, we have
\begin{equation*}
e^{\varphi_t^-}\leq 1-t^2.
\end{equation*}
Hence, 
\begin{equation*}
\int_{0}^{1} (1-x^2)^{\frac{N}{2}-1}|v_t^{+}|e^{N\varphi_t^-}dx\leq C(1-t^2)^N\int_{0}^{1} (1-x^2)^{\frac{N}{2}-1}|v_t^{+}|dx  \leq C(1-t^2)^N.
\end{equation*}

Combining with \eqref{vt}, Claim 3 follows.

Finally, define 
\begin{equation*}
u_t(x)=\varphi_t^++\varphi_t^--2\log(1-t^2).
\end{equation*}
Then $u_t$ is an even function with
\begin{equation*}
\int_{-1}^{1}x(1-x^2)^{\frac{N}{2}-1}e^{Nu_t}dx=0.
\end{equation*}
Hence, $u_t\in \mathcal{L}_{r,N}$.

\textbf{Claim 4: }
\begin{equation*}
\log\left(\frac{\Gamma(\frac{N+1}{2})}{\sqrt{\pi}\Gamma(\frac{N}{2})}\int_{-1}^{1}(1-x^2)^\frac{N-2}{2} e^{Nu_t} dx\right)=-N\log(1-t^2)+O(1).
\end{equation*}

Indeed, by definition of $u_t$, we have
\begin{equation*}
4^{-N}(1-t^2)^{-N}\max\{e^{N\varphi_t^+},e^{N\varphi_t^-}\}\leq e^{Nu_t}\leq (1-t^2)^{-N}\max\{e^{N\varphi_t^+},e^{N\varphi_t^-}\}.
\end{equation*}
Hence, we have
\begin{equation*}
4^{-N-1}(1-t^2)^{-N}\left(e^{N\varphi_t^+}+e^{N\varphi_t^-}\right)\leq e^{Nu_t}\leq (1-t^2)^{-N}\left(e^{N\varphi_t^+}+e^{N\varphi_t^-}\right).
\end{equation*}

By \eqref{varphit}, we have
\begin{equation*}
c(1-t^2)^{-N}\leq \frac{\Gamma(\frac{N+1}{2})}{\sqrt{\pi}\Gamma(\frac{N}{2})}\int_{-1}^{1}(1-x^2)^\frac{N-2}{2} e^{Nu_t} dx\leq C(1-t^2)^{-N}
\end{equation*}
for some $c,C>0$. Then Claim 4 follows.

By Claim 2 and Claim 3, we have
\begin{equation*}
\begin{aligned}
\int_{-1}^{1}(1-x^2)^{\frac{N}{2}-1}u_tP_{r,N}u_tdx
=&\int_{-1}^{1}(1-x^2)^{\frac{N}{2}-1}\varphi_t^+P_{r,N}\varphi_t^+dx+\int_{-1}^{1}(1-x^2)^{\frac{N}{2}-1}\varphi_t^-P_{r,N}\varphi_t^-dx\\
+&2\int_{-1}^{1}(1-x^2)^{\frac{N}{2}-1}\varphi_t^+P_{r,N}\varphi_t^-dx\\
=&-4(N-1)!\frac{\sqrt{\pi}\Gamma(\frac{N}{2})}{\Gamma(\frac{N+1}{2})}\log(1-t^2)+O(1).
\end{aligned}
\end{equation*}

Meanwhile, since
\begin{equation*}
u_t=-\log(1+t^2-2tx)-\log(1+t^2+2tx),
\end{equation*}
we have
\begin{equation*}
\int_{-1}^{1}(1-x^2)^{\frac{N}{2}-1}u_tdx=O(1).
\end{equation*}

Combining with Claim 4, we obtain
\begin{equation*}
\mathcal{I}_{\alpha,N}(u_t)=-(N-1)!\frac{\sqrt{\pi}\Gamma(\frac{N}{2})}{\Gamma(\frac{N+1}{2})}(2\alpha-1)\log(1-t^2)+O(1).
\end{equation*}

If $\alpha<\frac{1}{2}$, then we have $\mathcal{I}_{\alpha,N}(u_t)\to -\infty$ as $t\to 1^-$, a contradiction.
\end{proof}

\section{Technicalities}\label{Technicalities}
In this appendix, we will include several technicalities. 
\begin{lemma}\label{PNwf}
For any integer $N\geq 3$ and smooth function $f$ on $[-1,1]$, we have
\begin{equation*}
P_{r,N}((1-x^2)f')=(1-x^2)(P_{r,N}f)'-NxP_{r,N}f.
\end{equation*}
\end{lemma}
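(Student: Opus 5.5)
We need to prove $P_{r,N}((1-x^2)f')=(1-x^2)(P_{r,N}f)'-NxP_{r,N}f$. Writing $W:=(1-x^2)\frac{d}{dx}$, the claim is the commutator identity $[P_{r,N},W]f=NxP_{r,N}f$. The natural route is spectral: verify the identity on each Gegenbauer polynomial $F_k$, then extend by linearity and density. Both sides are continuous on smooth functions in $L^2((1-x^2)^{\frac{N-2}{2}}dx)$, since $P_{r,N}$ is a (pseudo)differential operator of order $N$ and the coefficients of $F_k$ decay rapidly. For smooth $f$, the expansion $f=\sum f_kF_k$ converges in every Sobolev norm, so it suffices to treat $f=F_k$.

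\textbf{Computation on $F_k$.} Write $\mu_k:=\frac{\Gamma(k+N)}{\Gamma(k)}$ for $k\ge1$ and $\mu_0=0$, so $P_{r,N}F_k=\mu_kF_k$ by \eqref{Paneitzeigen}. By \eqref{recurence2},
\[
P_{r,N}\bigl((1-x^2)F_k'\bigr)=\frac{\lambda_k}{2k+N-1}\bigl(\mu_{k-1}F_{k-1}-\mu_{k+1}F_{k+1}\bigr).
\]
Using \eqref{recurence1} and \eqref{recurence2}, the right-hand side of the claim is
\[
\mu_k\Bigl[\frac{\lambda_k}{2k+N-1}(F_{k-1}-F_{k+1})-\frac{Nk}{2k+N-1}F_{k-1}-\frac{N(k+N-1)}{2k+N-1}F_{k+1}\Bigr].
\]
Comparing coefficients reduces the lemma to two scalar identities. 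For $F_{k-1}$ we need $\lambda_k\mu_{k-1}=\mu_k(\lambda_k-Nk)=\mu_k\,k(k-1)$. Since
\[
\frac{\mu_{k-1}}{\mu_k}=\frac{\Gamma(k+N-1)\Gamma(k)}{\Gamma(k-1)\Gamma(k+N)}=\frac{k-1}{k+N-1},
\]
we get $\lambda_k\mu_{k-1}=k(k-1)\mu_k$, as required. For $F_{k+1}$ we need $\lambda_k\mu_{k+1}=\mu_k(\lambda_k+N(k+N-1))=\mu_k(k+N-1)(k+N)$. Since
\[
\frac{\mu_{k+1}}{\mu_k}=\frac{k+N}{k},
\]
we get $\lambda_k\mu_{k+1}=(k+N-1)(k+N)\mu_k$, as required.

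\textbf{Edge cases.} For $k=0$ both sides vanish. For $k=1$ the $F_{k-1}=F_0$ term has coefficient $\mu_0=0$ on the left, and on the right its coefficient is $\mu_1\cdot 1\cdot 0=0$, so the identity still holds.

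\textbf{Main obstacle.} The main obstacle is justifying the passage to general smooth $f$ when $N$ is odd, where $P_{r,N}$ is nonlocal. I would define $P_{r,N}$ spectrally via \eqref{Paneitzeigen}. Then $W$ maps $F_k$ into $\operatorname{span}\{F_{k-1},F_{k+1}\}$ with coefficients of order $k$, so both sides applied to $\sum f_kF_k$ are given by absolutely convergent series whenever the $f_k$ decay faster than any power of $k$, which holds for smooth $f$. Term-by-term application is then legitimate, and the identity for $F_k$ finishes the proof.
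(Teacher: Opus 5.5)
Your proposal is correct and matches the paper's proof: reduce to $f=F_k$, then use \eqref{Paneitzeigen}, \eqref{recurence1} and \eqref{recurence2} to compare the $F_{k-1}$ and $F_{k+1}$ coefficients, which comes down to the identities $\lambda_k\mu_{k-1}=k(k-1)\mu_k$ and $\lambda_k\mu_{k+1}=(k+N-1)(k+N)\mu_k$. Your justification of the passage from $F_k$ to general smooth $f$ (rapid decay of the Gegenbauer coefficients and term-by-term application of the spectrally defined $P_{r,N}$) supplies detail that the paper leaves implicit in ``it suffices to verify it for $f=F_k$.''
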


\begin{proof}
It suffices to verify it for $f=F_k=F_k^{\frac{N-1}{2}}$.

For $k=0$, we have $F_0=1$. By \eqref{Paneitzeigen}, both sides vanish.

For $k\geq 1$, by \eqref{Paneitzeigen} and \eqref{recurence2}, we have
\begin{equation*}
P_{r,N}((1-x^2)F_k')=\frac{\lambda_k}{2k+N-1}\left(\frac{\Gamma(k+N-1)}{\Gamma(k-1)}F_{k-1}-\frac{\Gamma(k+N+1)}{\Gamma(k+1)}F_{k+1}\right).
\end{equation*}

On the other hand, by \eqref{Paneitzeigen}, \eqref{recurence1} and \eqref{recurence2}, we get
\begin{equation*}
\begin{aligned}
(1-x^2)(P_{r,N}F_k)'-NxP_{r,N}F_k
&=\frac{\Gamma(k+N)}{\Gamma(k)}((1-x^2)F_k'-NxF_k)\\
&=\frac{\Gamma(k+N)}{\Gamma(k)(2k+N-1)}(k(k-1)F_{k-1}-(k+N-1)(k+N)F_{k+1})\\
&=\frac{\lambda_k}{2k+N-1}\left(\frac{\Gamma(k+N-1)}{\Gamma(k-1)}F_{k-1}-\frac{\Gamma(k+N+1)}{\Gamma(k+1)}F_{k+1}\right).
\end{aligned}    
\end{equation*}
The desired result follows.
\end{proof}

Next, we prove Lemma \ref{lemXGPNu}.
\begin{proof}[Proof of Lemma \ref{lemXGPNu}].\
Let
\begin{equation*}
r_k:=\frac{k(k+N-1)}{2k+N-1},
\end{equation*}
and $r_0:=0$.

Write
\begin{equation*}
u=\sum_{k=0}^{\infty}d_kF_k,
\qquad
G=(1-x^2)u'=\sum_{j=0}^{\infty}a_jF_j,
\qquad
xG=\sum_{k=0}^{\infty}e_kF_k.
\end{equation*}
Then \eqref{recurence1} and \eqref{recurence2} give
\begin{equation}\label{aj-dj}
a_j=r_{j+1}d_{j+1}-r_{j-1}d_{j-1}
\end{equation}
and
\begin{equation}\label{ek-ak}
e_k=\frac{k+1}{2k+N+1}a_{k+1}
+\frac{k+N-2}{2k+N-3}a_{k-1}
\end{equation}
for $k\geq 1$, where terms with negative indices are omitted.  From \eqref{Fknorm}, \eqref{Paneitzeigen} and \eqref{QNeigen}, we have
\begin{equation}\label{normsimplification1}
\frac{\Gamma(k+N-1)}{\Gamma(k+1)}\int_{\mathbb{S}^N}F_k^2dw=\frac{(N-1)!}{2k+N-1},
\end{equation}
and
\begin{equation}\label{normsimplification2}
\frac{\Gamma(k+N)}{\Gamma(k)}\int_{\mathbb{S}^N}F_k^2dw=(N-1)!r_k.
\end{equation}

Set
\begin{equation*}
L=\int_{\mathbb{S}^N}xGP_{r,N}u dw
=\sum_{k=1}^{\infty}\frac{\Gamma(k+N)}{\Gamma(k)}d_ke_k\int_{\mathbb{S}^N}F_k^2dw.
\end{equation*}
Substitution of \eqref{aj-dj}--\eqref{normsimplification2} shows that the coefficient of $d_k^2$ in $L$ is
\begin{equation*}
\mathcal D_k
=\frac{(N-1)(N-1)!}{2}r_k^2
\left(\frac1{2k+N-3}+\frac1{2k+N+1}\right),
\end{equation*}
while the coefficient of $d_kd_{k+2}$ is
\begin{equation*}
\mathcal C_k
=-\frac{(N-1)(N-1)!}{2k+N+1}r_kr_{k+2}.
\end{equation*}
There are no other mixed terms.

On the other hand,
\begin{equation*}
\begin{aligned}
R
=&\frac{N-1}{2}\int_{\mathbb{S}^N}GQ_NGdw\\
=&\frac{N-1}{2}\sum_{j=0}^{\infty}\frac{\Gamma(j+N-1)}{\Gamma(j+1)}\int_{\mathbb{S}^N}F_j^2dw
(r_{j+1}d_{j+1}-r_{j-1}d_{j-1})^2.    
\end{aligned}
\end{equation*}
Using \eqref{normsimplification1}, its $d_k^2$ coefficient is
\begin{equation*}
\frac{N-1}{2}r_k^2\left(\frac{\Gamma(k+N-2)}{\Gamma(k)}\int_{\mathbb{S}^N}F_{k-1}^2dw+\frac{\Gamma(k+N)}{\Gamma(k+2)}\int_{\mathbb{S}^N}F_{k+1}^2dw\right)=\mathcal D_k,
\end{equation*}
and its $d_kd_{k+2}$ coefficient is
\begin{equation*}
-(N-1)\frac{\Gamma(k+N)}{\Gamma(k+2)}r_kr_{k+2}\int_{\mathbb{S}^N}F_{k+1}^2dw =\mathcal C_k.
\end{equation*}
Thus $L=R$.    
\end{proof}

\begin{lemma}\label{Differentiable}
The functionals
\begin{equation*}
Z(u):=\int_{\mathbb{S}^N}e^{Nu}dw,\qquad C(u):=\int_{\mathbb{S}^N}xe^{Nu}dw
\end{equation*}
are continuously Fr\'echet differentiable on the axially symmetric subspace of $H^{\frac{N}{2}}(\mathbb{S}^N)$ with
\begin{equation*}
\delta Z(u)[h]=N\int_{\mathbb{S}^N}he^{Nu}dw,\qquad \delta C(u)[h]:=N\int_{\mathbb{S}^N}xhe^{Nu}dw.
\end{equation*}
\end{lemma}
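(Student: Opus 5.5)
The plan is to treat $Z$ first and then obtain $C$ by the same argument with the bounded weight $x$ inserted. The tool throughout is Beckner's inequality with $\alpha=1$, applied to arbitrary multiples $pu$. Together with Hölder's inequality for the mean part, it gives an exponential integrability bound: for every $p\ge1$ and $u\in H^{\frac N2}(\mathbb{S}^N)$,
\begin{equation*}
\int_{\mathbb{S}^N}e^{pN|u|}dw\leq 2\exp\Big(pN\Big|\int_{\mathbb{S}^N}u\,dw\Big|+\frac{Np^2}{2(N-1)!}\int_{\mathbb{S}^N}uP_{N}u\,dw\Big).
\end{equation*}
This is bounded uniformly on bounded subsets of $H^{\frac N2}$, since the right-hand side is controlled by $\|u\|_{H^{N/2}}$. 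We also use the Sobolev embedding $H^{\frac N2}\hookrightarrow L^q$ for every $q<\infty$.

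\textbf{Step 1 (Gâteaux derivative and the Taylor remainder).} Fix $u$ and $h$ with $\|h\|_{H^{N/2}}\le1$. We write
\begin{equation*}
e^{N(u+h)}-e^{Nu}-Nhe^{Nu}=N^2h^2e^{Nu}\int_0^1(1-s)e^{sNh}ds.
\end{equation*}
By Hölder's inequality with three exponents $3$, the $L^1$ norm of this remainder is at most
\begin{equation*}
N^2\|h\|_{L^6}^2\,\|e^{Nu}\|_{L^3}\,\sup_{0\le s\le1}\|e^{sNh}\|_{L^3}.
\end{equation*}
The last two factors are bounded by the integrability bound above. The Sobolev embedding gives $\|h\|_{L^6}\le C\|h\|_{H^{N/2}}$. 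The remainder is therefore $O(\|h\|_{H^{N/2}}^2)$, which is $o(\|h\|)$, so $Z$ is Fréchet differentiable with $\delta Z(u)[h]=N\int he^{Nu}dw$. The linear map $h\mapsto N\int he^{Nu}dw$ is bounded because $e^{Nu}\in L^2$ and $h\in L^2$.

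\textbf{Step 2 (continuity of the derivative).} The operator norm of $\delta Z(u)-\delta Z(v)$ is at most $CN\|e^{Nu}-e^{Nv}\|_{L^2}$. Using $|e^a-e^b|\le|a-b|(e^a+e^b)$ and Hölder's inequality, this is at most
\begin{equation*}
C\|u-v\|_{L^4}\left(\|e^{Nu}\|_{L^4}+\|e^{Nv}\|_{L^4}\right).
\end{equation*}
The bracket stays bounded for $v$ near $u$ in $H^{\frac N2}$, so the derivative is continuous.

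\textbf{Step 3 (the functional $C$).} Since $|x|\le1$, the estimates of Steps 1 and 2 go through verbatim with $e^{Nu}$ replaced by $xe^{Nu}$. The axial symmetry plays no role beyond restricting everything to a closed subspace.

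The only step that is not routine is the uniform exponential integrability along the segment $u+sh$, and it is exactly what Beckner's inequality supplies.
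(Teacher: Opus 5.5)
Your proof is correct and follows essentially the same route as the paper. Both arguments bound the second-order Taylor remainder by Hölder's inequality, using $\|h\|_{L^6}^2$ from the Sobolev embedding and $L^3$ exponential bounds from Beckner's inequality applied to $\pm 3u$ and $\pm 3h$. Both then get continuity of the derivative from a mean-value estimate combined with Hölder's inequality. The only differences are cosmetic: you write the remainder in integral form, and you measure $e^{Nu}-e^{Nv}$ in $L^2$ where the paper uses $L^{6/5}$.
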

\begin{proof}
Note that for any real number $p,q$, we have
\begin{equation*}
|e^{p+q}-e^p-qe^p|\leq\frac12 q^2e^{|p|+|q|}.
\end{equation*}

With $p=Nu$ and $q=Nh$, H\"older's inequality gives
\begin{align*}
\int_{\mathbb{S}^N}|h|^2e^{N|u|+N|h|}dw
&\leq \|h\|_{L^6(\mathbb{S}^N)}^2
\left(\int_{\mathbb{S}^N}e^{3N|u|}dw\right)^{1/3}
\left(\int_{\mathbb{S}^N}e^{3N|h|}dw\right)^{1/3}.
\end{align*}

By Sobolev embedding $H^{N/2}(\mathbb{S}^N)\hookrightarrow L^6(\mathbb{S}^N)$ and Beckner's inequality with $\alpha=1$ applied to $\pm3u$ and $\pm3h$, we have the differentiability of $Z(u)$. To prove continuity of $\delta Z(u)$, note that if $u_j\to u$ in $H^{N/2}$, the inequality
\begin{equation*}
|e^{Nu_j}-e^{Nu}|
\leq N|u_j-u|e^{N|u|+N|u_j-u|}
\end{equation*}
along with the same H\"older's inequality and locally uniform exponential bounds imply 
\begin{equation*}
e^{Nu_j}\to e^{Nu} \text{ in } L^{\frac{6}{5}}(\mathbb{S}^N).    
\end{equation*}

A similar argument also holds for $\delta C(u)$.

\end{proof}

\begin{lemma}\label{tinstability}
Let $N\geq 3$, $\frac{1}{2}\leq \alpha\leq 1$ and $0<t<\frac{2}{N(N-1)}$. Then
\begin{equation*}
-t+\frac{t}{\alpha(N+1)} -\frac{2t^2}{1+Nt} +\frac{\alpha(N+1)t^2}{(1+Nt)^2} +\frac{(N+1)t^2}{1+Nt}<0
\end{equation*}
\end{lemma}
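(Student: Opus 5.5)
The plan is to divide by $t>0$ and reduce the claim to an elementary inequality in two variables. Bounding the $\alpha$-dependence by convexity and the $t$-dependence by monotonicity then reduces everything to checking two endpoint values.

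\textbf{Simplification.} Since $t>0$, the claim is equivalent to $f(\alpha,t)<0$. Combining $\frac{(N+1)t}{1+Nt}-\frac{2t}{1+Nt}=\frac{(N-1)t}{1+Nt}$, we get
\begin{equation*}
f(\alpha,t):=-1+\frac{1}{\alpha(N+1)}+\frac{(N-1)t}{1+Nt}+\frac{\alpha(N+1)t}{(1+Nt)^2}.
\end{equation*}

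\textbf{Reduction in $\alpha$.} For fixed $t$, the map $\alpha\mapsto \frac{1}{\alpha(N+1)}+\alpha\frac{(N+1)t}{(1+Nt)^2}$ is convex on $(0,\infty)$. Hence on $[\frac12,1]$ its maximum is attained at $\alpha=\frac12$ or $\alpha=1$, and it suffices to treat these two values.

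\textbf{Reduction in $t$.} For fixed $\alpha$, the term $\frac{(N-1)t}{1+Nt}$ is strictly increasing in $t$. The term $\frac{t}{(1+Nt)^2}$ has derivative $\frac{1-Nt}{(1+Nt)^3}$, so it is increasing for $t<\frac1N$. Since $N\ge3$ gives $\frac{2}{N(N-1)}\le\frac1N$, the function $f(\alpha,\cdot)$ is strictly increasing on $(0,t_0]$ with $t_0=\frac{2}{N(N-1)}$. Therefore $f(\alpha,t)<f(\alpha,t_0)$ for $0<t<t_0$, and it remains to show $f(\alpha,t_0)\le 0$ at the two endpoint values of $\alpha$.

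\textbf{Endpoint evaluation.} At $t_0$ we have $1+Nt_0=\frac{N+1}{N-1}$. This gives
\begin{equation*}
\frac{(N-1)t_0}{1+Nt_0}=\frac{2(N-1)}{N(N+1)},\qquad \frac{(N+1)t_0}{(1+Nt_0)^2}=\frac{2(N-1)}{N(N+1)}.
\end{equation*}
For $\alpha=\frac12$ this yields
\begin{equation*}
f(\tfrac12,t_0)=\frac{-N(N+1)+2N+3(N-1)}{N(N+1)}=-\frac{(N-1)(N-3)}{N(N+1)}\le 0 .
\end{equation*}
For $\alpha=1$ it yields
\begin{equation*}
f(1,t_0)=\frac{-N(N+1)+N+4(N-1)}{N(N+1)}=-\frac{(N-2)^2}{N(N+1)}<0 .
\end{equation*}

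\textbf{Conclusion and the delicate point.} Combining the three reductions gives $f(\alpha,t)<0$ for all $\frac12\le\alpha\le1$ and $0<t<t_0$, which is the claim. The only delicate case is $N=3$, $\alpha=\frac12$, where the endpoint value $f(\tfrac12,t_0)$ vanishes. There strictness comes solely from the strict monotonicity in $t$ combined with the strict inequality $t<t_0$, so this is the step I would double-check.
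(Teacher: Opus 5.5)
Your proof is correct. Its first step is the paper's: the paper also uses strict convexity in $\alpha$ (it computes $\partial_\alpha^2 H_N=\frac{2t}{(N+1)\alpha^3}>0$) to reduce to $\alpha\in\{\frac12,1\}$. The two arguments differ only in how the endpoint cases are checked.

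The paper factors each endpoint function in $t$:
\[
H_N(1,t)=-\frac{Nt(1-t)^2}{(N+1)(1+Nt)^2},\qquad H_N(\tfrac12,t)=-\frac{t}{2(N+1)(1+Nt)^2}\bigl[-2N(N-1)t^2+(N^2-6N+1)t+2(N-1)\bigr].
\]
At $\alpha=1$ this gives negativity for every $t>0$ with $t\neq1$, so the bound on $t$ is needed only at $\alpha=\frac12$. There one must still check that the concave quadratic in brackets is positive on $(0,\frac{2}{N(N-1)})$. It equals $2(N-1)$ at $t=0$ and $\frac{2(N-3)(N+1)^2}{N(N-1)}\ge 0$ at $t_0$, and the paper leaves this check implicit.

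You avoid factoring altogether. Because $t_0\le\frac1N$, the map $t\mapsto f(\alpha,t)$ is strictly increasing on $(0,t_0]$, so you only need the two numbers $f(\frac12,t_0)=-\frac{(N-1)(N-3)}{N(N+1)}$ and $f(1,t_0)=-\frac{(N-2)^2}{N(N+1)}$. This is shorter and fully self-contained. It also shows directly that the threshold $\frac{2}{N(N-1)}$ is sharp at $N=3$, $\alpha=\frac12$, which matches the vanishing of the paper's bracket at $t_0$ when $N=3$. You handle that borderline case soundly, via strict monotonicity of $\frac{(N-1)t}{1+Nt}$ together with $t<t_0$.

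What the paper's route gives in exchange is exact closed forms at each endpoint, such as the $t$-independent negativity at $\alpha=1$.
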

\begin{proof}
Denote the left-hand side by $H_N(\alpha,t)$. For fixed $t>0$, we have
\begin{equation*}
\frac{\partial^2}{\partial\alpha^2}H_N(\alpha,t)=\frac{2t}{(N+1)\alpha^3}>0.
\end{equation*}
Hence, $H_N$ is strictly convex on $[\frac{1}{2},1]$. It suffices to check $H_N<0$ at $\alpha=1$ and $\alpha=\frac{1}{2}$.

At $\alpha=1$, we have
\begin{equation*}
H_N(1,t)=-\frac{Nt(1-t)^2}{(N+1)(1+Nt)^2}<0
\end{equation*}
since $0<t<\frac{2}{N(N-1)}\leq \frac{1}{3}$.

At $\alpha=\frac{1}{2}$, we have
\begin{equation*}
H_N(\frac{1}{2},t)=-\frac{t}{2(N+1)(1+Nt)^2}[-2N(N-1)t^2+(N^2-6N+1)t+2(N-1)]<0.
\end{equation*}
This completes the proof.
\end{proof}

\section*{Acknowledgements}
 The research of  C. Gui is supported by NSFC Key Program (Grant No.12531010), University of Macau research grants CPG2024-00016-FST, CPG2025-00032-FST, CPG2026-00027-FST, SRG2023-00011-FST, MYRGGRG2023-00139-FST-UMDF, UMDF Professorial Fellowship
of Mathematics, Macao SAR FDCT 0003/2023/RIA1 and Macao SAR FDCT 0024/2023/RIB1. The research of J. Wei is partially supported by General Research Grant of HKSAR GRF 14309824. The authors acknowledge the use of AI tools. All mathematical arguments and proofs in the final manuscript were checked and written by the authors.


\medskip

\begin{thebibliography}{99}


\bibitem{Beckner1993}
W.~Beckner,
\emph{Sharp Sobolev inequalities on the sphere and the Moser--Trudinger inequality},
Ann. of Math. (2) \textbf{138} (1993), no.~1, 213--242.

\bibitem{CLY2019}
J.~S.~Case, Y.-J.~Lin, and W.~Yuan,
\emph{Conformally variational Riemannian invariants},
Trans. Amer. Math. Soc. \textbf{371} (2019), no.~11, 8217--8254.

\bibitem{CM2023}
J.~S.~Case and A.~Malchiodi,
\emph{A factorization of the GJMS operators of special Einstein products and applications},
J. Lond. Math. Soc. (2) \textbf{110} (2024), no.~5,
Paper No.~e70023, 17 pp.

\bibitem{ChangGui202}
S.-Y.~A.~Chang and C.~Gui,
\emph{A sharp inequality on the exponentiation of functions on the sphere},
Comm. Pure Appl. Math. \textbf{76} (2023), no.~6, 1303--1326.

\bibitem{ChangHang2022}
S.-Y.~A.~Chang and F.~B.~Hang,
\emph{Improved Moser--Trudinger--Onofri inequality under constraints},
Comm. Pure Appl. Math. \textbf{75} (2022), no.~1, 197--220.

\bibitem{ChangYang1987}
S.-Y.~A.~Chang and P.~C.~Yang,
\emph{Prescribing Gaussian curvature on $S^2$},
Acta Math. \textbf{159} (1987), nos.~3--4, 215--259.

\bibitem{ChangYang1988}
S.-Y.~A.~Chang and P.~C.~Yang,
\emph{Conformal deformation of metrics on $S^2$},
J. Differential Geom. \textbf{27} (1988), no.~2, 259--296.

\bibitem{ChangYang1995}
S.-Y.~A.~Chang and P.~C.~Yang,
\emph{Extremal metrics of zeta function determinants on $4$-manifolds},
Ann. of Math. (2) \textbf{142} (1995), no.~1, 171--212.

\bibitem{ChangYang1997}
S.-Y.~A.~Chang and P.~C.~Yang,
\emph{On uniqueness of solutions of $n$th-order differential equations in conformal geometry},
Math. Res. Lett. \textbf{4} (1997), no.~1, 91--102.

\bibitem{DHL2000}
Z.~Djadli, E.~Hebey, and M.~Ledoux,
\emph{Paneitz-type operators and applications},
Duke Math. J. \textbf{104} (2000), no.~1, 129--169.

\bibitem{DM2008}
Z.~Djadli and A.~Malchiodi,
\emph{Existence of conformal metrics with constant $Q$-curvature},
Ann. of Math. (2) \textbf{168} (2008), no.~3, 813--858.


\bibitem{FFGG1998}
J.~Feldman, R.~Froese, N.~Ghoussoub, and C.~Gui,
\emph{An improved Moser--Aubin--Onofri inequality for axially symmetric functions on $S^2$},
Calc. Var. Partial Differential Equations \textbf{6} (1998), no.~2, 95--104.

\bibitem{FG2013}
C.~Fefferman and C.~R.~Graham,
\emph{Juhl's formulae for GJMS operators and $Q$-curvatures},
J. Amer. Math. Soc. \textbf{26} (2013), no.~4, 1191--1207.

\bibitem{FrankLieb2012}
R.~L. Frank and E.~H. Lieb,
\emph{A new, rearrangement-free proof of the sharp Hardy--Littlewood--Sobolev inequality},
in \emph{Spectral Theory, Function Spaces and Inequalities},
Oper. Theory Adv. Appl., Vol.~219,
Birkh\"auser/Springer Basel AG, Basel, 2012, pp.~55--67.

\bibitem{GL2010}
N.~Ghoussoub and C.-S.~Lin,
\emph{On the best constant in the Moser--Onofri--Aubin inequality},
Comm. Math. Phys. \textbf{298} (2010), no.~3, 869--878.

\bibitem{GJMS1992}
C.~R.~Graham, R.~Jenne, L.~J.~Mason, and G.~A.~J.~Sparling,
\emph{Conformally invariant powers of the Laplacian. I. Existence},
J. Lond. Math. Soc. (2) \textbf{46} (1992), no.~3, 557--565.

\bibitem{GM2018}
C.~Gui and A.~Moradifam,
\emph{The sphere covering inequality and its applications},
Invent. Math. \textbf{214} (2018), no.~3, 1169--1204.

\bibitem{GHM2020}
C.~Gui, F.~Hang, and A.~Moradifam,
\emph{The sphere covering inequality and its dual},
Comm. Pure Appl. Math. \textbf{73} (2020), no.~12, 2685--2707.

\bibitem{GHX2021}
C.~Gui, Y.~Hu, and W.~Xie,
\emph{Improved Beckner's inequality for axially symmetric functions on $\mathbb{S}^4$},
Rev. Mat. Iberoam. \textbf{40} (2024), no.~1, 355--388.

\bibitem{GHW2022}
C.~Gui, Y.~Hu, and W.~Xie,
\emph{Improved Beckner's inequality for axially symmetric functions on $\mathbb{S}^n$},
J. Funct. Anal. \textbf{282} (2022), no.~5,
Paper No.~109335, 47 pp.

\bibitem{GLWY2025}
C.~Gui, T.~Li, J.~Wei, and Z.~Ye,
\emph{Sharp Beckner's inequalities for axially symmetric functions on $\mathbb{S}^6$ and $\mathbb{S}^8$},
Adv. Math. \textbf{480} (2025), Paper No.~110487, 64 pp.


\bibitem{GW2000}
C.~Gui and J.~Wei,
\emph{On a sharp Moser--Aubin--Onofri inequality for functions on $S^2$ with symmetry},
Pacific J. Math. \textbf{194} (2000), no.~2, 349--358.

\bibitem{GurMal2015}
M.~J.~Gursky and A.~Malchiodi,
\emph{A strong maximum principle for the Paneitz operator and a non-local flow for the $Q$-curvature},
J. Eur. Math. Soc. (JEMS) \textbf{17} (2015), no.~9, 2137--2173.

\bibitem{JinLiXiong2017}
T.~Jin, Y.~Y.~Li, and J.~Xiong,
\emph{The Nirenberg problem and its generalizations: a unified approach},
Math. Ann. \textbf{369} (2017), nos.~1--2, 109--151.

\bibitem{LWY2022}
T.~Li, J.~Wei, and Z.~Ye,
\emph{On sharp Beckner's inequality for axially symmetric functions on $\mathbb{S}^4$},
Math. Res. Lett. \textbf{31} (2024), no.~5, 1523--1550.

\bibitem{LiXiong2019}
Y.~Y.~Li and J.~Xiong,
\emph{Compactness of conformal metrics with constant $Q$-curvature. I},
Adv. Math. \textbf{345} (2019), 116--160.

\bibitem{Mal2006}
A.~Malchiodi,
\emph{Compactness of solutions to some geometric fourth-order equations},
J. Reine Angew. Math. \textbf{594} (2006), 137--174.

\bibitem{Mori1998}
M.~Morimoto,
\emph{Analytic Functionals on the Sphere},
Translations of Mathematical Monographs, Vol.~178,
American Mathematical Society, Providence, RI, 1998.


\bibitem{Olver2010}
F.~W.~J.~Olver, D.~W.~Lozier, R.~F.~Boisvert, and C.~W.~Clark, eds.,
\emph{NIST Handbook of Mathematical Functions},
Cambridge University Press, New York, NY, 2010.

\bibitem{Paneitz2008}
S.~M.~Paneitz,
\emph{A Quartic Conformally Covariant Differential Operator for Arbitrary
    Pseudo-Riemannian Manifolds (Summary)},
SIGMA Symmetry Integrability Geom. Methods Appl. \textbf{4} (2008),
Paper No.~036, 3 pp.

\bibitem{Seeley1967}
R.~T. Seeley,
\emph{Complex powers of an elliptic operator},
in \emph{Singular Integrals (Proc. Sympos. Pure Math., Chicago, Ill., 1966)},
Proc. Sympos. Pure Math., Vol.~10, Amer. Math. Soc., Providence, RI, 1967, pp.~288--307.

\bibitem{SSTW2019}
Y.~Shi, J.~Sun, G.~Tian, and D.~Wei,
\emph{Uniqueness of the mean field equation and rigidity of Hawking mass},
Calc. Var. Partial Differential Equations \textbf{58} (2019), no.~2,
Paper No.~41, 16 pp.


\bibitem{Taylor1991}
M.~E. Taylor,
\emph{Pseudodifferential Operators and Nonlinear PDE},
Progress in Mathematics, Vol.~100, Birkh\"auser Boston, Boston, MA, 1991.

\bibitem{Widom1988}
H.~Widom,
\emph{On an inequality of Osgood, Phillips and Sarnak},
Proc. Amer. Math. Soc. \textbf{102} (1988), no.~3, 773--774.

\bibitem{WX1998}
J.~Wei and X.~Xu,
\emph{On conformal deformations of metrics on $S^n$},
J. Funct. Anal. \textbf{157} (1998), no.~1, 292--325.

\bibitem{Zhang2025}
S. Zhang, \emph{The moving plane method and the uniqueness of high-order elliptic equation with GJMS operator}, Proc. Roy. Soc. Edinburgh Sect. A, First View (2025), 1–45,

\end{thebibliography}
\end{document}